\documentclass[a4paper,12pt,reqno]{amsart}
\usepackage{latexsym}
\usepackage{amssymb}
\usepackage{amsmath,color}
\usepackage{layout}

  \topmargin -0.4in  \headsep 0.4in  \textheight 9.3in
   \oddsidemargin 0.02in  \evensidemargin 0.15in
\textwidth 6.3in

\newtheorem{teo}{Theorem}[section]
\newtheorem{lema}[teo]{Lemma}

\newtheorem{prop}[teo]{Proposition}

\newtheorem{hyp}[teo]{Hypothesis}

\newcommand{\R}{\mathbb{R}}
\newcommand{\N}{\mathbb{N}}

\newcommand{\der}{\delta}

\newcommand{\iou}{\int_{0}^{1}}
\newcommand{\iot}{\int_{0}^{t}}

\newcommand{\ist}{\int_{s}^{t}}
\newcommand{\norm}[1]{\lVert #1\rVert}

\newcommand{\ott}{[0,T]}
\newcommand{\ou}{[0,1]}

\newcommand{\bd}{\mathbf{B}^{\mathbf{2}}}

\newcommand{\bdst}{{\bf B}_{st}^{\bf 2}}


\newcommand{\cac}{\mathcal C}

\newcommand{\cl}{\mathcal L}

\newcommand{\al}{\alpha}
\newcommand{\ep}{\varepsilon}
\newcommand{\e}{\varepsilon}

\newcommand{\ga}{\gamma}

\newcommand{\si}{\sigma}

\newcommand{\lp}{\left(}
\newcommand{\rp}{\right)}

\newcommand{\lcl}{\left\{}
\newcommand{\rcl}{\right\}}


\begin{document}

\title[Weak approximation of fractional SDEs]{Weak approximation of fractional SDEs: \\  the Donsker setting}
\author[X. Bardina  \and C. Rovira \and S. Tindel
]{X. Bardina  \and C. Rovira  \and  S. Tindel}
\date{\today}

\begin{abstract}
In this note, we take up the study of weak convergence for stochastic
differential equations driven by a (Liouville) fractional Brownian motion $B$ with Hurst
parameter $H\in(1/3,1/2)$, initiated in \cite{BNRT}. In the current paper, we approximate the $d$-dimensional fBm by the convolution of a rescaled random walk with Liouville's kernel. We then show that the corresponding differential equation converges in law to a fractional SDE driven by $B$.
\end{abstract}

\keywords{Weak approximation, Kac-Stroock type approximation, fractional Brownian motion, rough paths.}

\subjclass[2000]{60H10, 60H05}

\thanks{This work was partially supported by MEC-FEDER Grants MTM2006-06427 and MTM2006-01351.}

\address{
{\it Xavier Bardina:}
{\rm Departament de Matem\`atiques, Facultat de Ci\`encies, Edifici C, Universitat Aut\`onoma de Barcelona, 08193 Bellaterra, Spain}.
{\it Email: }{\tt Xavier.Bardina@uab.cat}
\newline
$\mbox{ }$\hspace{0.1cm}
{\it Carles Rovira:}
{\rm Facultat de Matem\`atiques, Universitat de Barcelona, Gran Via 585, 08007 Barcelona, Spain}.
{\it Email: }{\tt carles.rovira@ub.edu}
\newline
$\mbox{ }$\hspace{0.1cm}
{\it Samy Tindel:}
{\rm Institut Élie Cartan Nancy, B.P. 239,
54506 Vanduvre-l\`es-Nancy Cedex, France}.
{\it Email: }{\tt tindel@iecn.u-nancy.fr}
}

\maketitle

\section{Introduction}

The current article can be seen as a companion paper to \cite{BNRT}, to which we refer for a further introduction. Indeed, in the latter reference, the following equation on the interval $\ou$  was considered (the generalization to $\ott$ being a matter of trivial considerations):
\begin{equation}\label{eq:eds-intro}
dy_t=\si\lp y_{t} \rp dB_t
+ b\lp   y_{t} \rp dt,\quad y_0=a\in\R^n,
\end{equation}
where $\sigma:\R^n\rightarrow \R^{n\times d}$, $b:\R^n\rightarrow\R^n$ are two bounded  and smooth enough functions, and $B$ stands for a $d$-dimensional fBm with Hurst parameter $H>1/3$.

\smallskip

Let us be more specific about the driving process for equation (\ref{eq:eds-intro}): we consider in the sequel the so-called $d$-dimensional Liouville fBm $B$, with Hurst parameter $H\in(1/3,1/2)$. Namely, $B$ can be written as $B=(B^1,\ldots,B^d)$, where the $B^i$'s are $d$ independent centered Gaussian processes of the form
\begin{equation}\label{eq:liouville-intro}
B_t^i=\iot (t-r)^{H-\frac12} dW_r^i,
\end{equation}
for a $d$-dimensional Wiener process $W=(W^1,\ldots,W^d)$. This process is very close to the usual fBm, in the sense that they only differ by a finite variation process (as pointed out in \cite{AMN}), and we shall see that its simple expression (\ref{eq:liouville-intro}) simplifies some of the computations in the sequel. In any case, $B$ falls into the scope of application of the rough paths theory, which means that equation (\ref{eq:eds-intro}) can be solved thanks to the semi-pathwise techniques contained in \cite{FVbk,Gu,LyonsBook}. The natural question raised in \cite{BNRT} was then the following: is it possible to approximate equations like (\ref{eq:eds-intro}) in law by ordinary differential equations, thanks to a Wong-Zakai type approximation (see \cite{KP,TZ,WZ} for further references on the topic)?

\smallskip

Some positive answer to this question had already been given in \cite{FV}, where some Gaussian sequences approximations were considered in a general context. In \cite{BNRT}, we focused on a natural and easily implementable (non Gaussian) scheme for $B$, based on Kac-Stroock's approximation to white noise (see \cite{kac, stroock}). However, another very natural way to approximate $B$ relies on Donsker's type scheme (see \cite{So} for the case $H>1/2$ and \cite{BFH} for the Brownian case), involving a rescaled random walk. We have thus decided to investigate weak approximations to (\ref{eq:eds-intro}) based on this process.

\smallskip

More precisely, as an approximating sequence of $B$, we shall choose $(X^{\e})_{\e>0}$, where
$X^{\e,i}$ is defined
as follows for $i=1,\ldots,d$: consider a family of independent random variables $\{\eta_k^i; \, k\ge 1, \, 1\le i\le d\}$, satisfying the
\begin{hyp}\label{hyp:1}
The random variables $\{\eta_k^i; \, k\ge 1, \, 1\le i\le d\}$ are independent and share the same law as another random variable $\eta$. Furthermore, $\eta$ is assumed to satisfy $E\left(\eta\right)=0, E\left(\eta^2\right)=1$ and is almost surely bounded by a constant $k_\eta$.
\end{hyp}
We then define $X^{\e,i}$ in the following way:
\begin{equation}\label{eq:def-x-ep}
X^{i,\e}(t)=\int_0^t (t+\e-r)^{H-\frac12}\theta^{i,\e}(r)dr,
\end{equation}
where
\begin{equation}\label{donsker}
\theta^{i,\varepsilon}(r):=\frac1{\varepsilon}\sum_{k=1}^{+\infty}\eta_k^i \, I_{[k-1,k)}
\left(\frac{r}{\varepsilon^2}\right).
\end{equation}
Notice that $X^\ep$ is really a process given by the convolution of the rescaled random walk $\theta^{\varepsilon}$ with Liouville's kernel.

\smallskip

Let us then consider the process $y^\e$ solution to equation (\ref{eq:eds-intro})
driven by $X^\e$, namely:
\begin{equation}\label{eq:eds-approx}
dy_t^\e=\si\lp y_{t}^\e \rp dX_t^\e
+ b\lp   y_{t}^\e \rp dt,\quad y_0^\e=a\in\R^n, \quad t\in\ott.
\end{equation}
Our main result is as follows:
\begin{teo}\label{thm:approx-diffusion}
Let $(y^\e)_{\e>0}$ be the family of processes defined by (\ref{eq:eds-approx}),
and let $1/3<\gamma<H$, where $H$ is the Hurst parameter of $B$.
Then, as $\ep\to 0$, $y^\ep$ converges in law to the process $y$ obtained as
the solution to (\ref{eq:eds-intro}), where the convergence takes place in the
H\"older space $C^\ga([0,1];\R^n)$.
\end{teo}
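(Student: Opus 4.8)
The plan is to lift the problem to the level of rough paths and exploit the continuity of the It\^o--Lyons solution map. Since $H\in(1/3,1/2)$, the natural framework for equation (\ref{eq:eds-intro}) is that of a $\ga$-H\"older rough path with $1/3<\ga<H$, so that $y$ (resp.\ $y^\e$) is understood as the image, under a continuous solution map, of the lift $\mathbf B=(B,\bd)$ (resp.\ of the lift $\mathbf X^\e=(X^\e,\mathbf X^{2,\e})$), where $\bd$ is the rough second-order iterated integral of $B$ and $\mathbf X^{2,\e}_{st}=\int_s^t(X^\e_u-X^\e_s)\otimes dX^\e_u$ is that of $X^\e$ (a genuine Riemann--Stieltjes integral, since $X^\e$ has finite variation). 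By the universal limit theorem, the map $\mathbf X\mapsto y$ is continuous from the space of $\ga$-H\"older rough paths into $C^\ga(\ou;\R^n)$; hence, by the continuous mapping theorem, it suffices to prove that $\mathbf X^\e$ converges in law to $\mathbf B$ in the $\ga$-H\"older rough path topology.

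First I would establish tightness of $\{\mathbf X^\e\}_{\e>0}$ in this topology. Via the Kolmogorov-type criterion for rough paths, this reduces to uniform moment bounds of the form $\sup_\e E\big[|X^\e_{st}|^{2m}\big]\le C_m|t-s|^{2m\ga'}$ and $\sup_\e E\big[|\mathbf X^{2,\e}_{st}|^{m}\big]\le C_m|t-s|^{2m\ga'}$ for some $\ga'\in(\ga,H)$ and arbitrarily large $m$. These are obtained by inserting the explicit Liouville kernel $(t+\e-r)^{H-\frac12}$ into the definitions (\ref{eq:def-x-ep})--(\ref{donsker}) and using Hypothesis \ref{hyp:1}: the boundedness of $\eta$ together with $E(\eta)=0$ and $E(\eta^2)=1$ controls the moments of the iterated sums of the rescaled walk, while the kernel singularity is integrable enough to produce the claimed H\"older exponents.

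Next I would identify the limit by proving convergence of the finite-dimensional distributions of $(X^\e,\mathbf X^{2,\e})$ to those of $(B,\bd)$. For the first level this is a Donsker-type invariance principle: the rescaled field $\theta^\e$ converges to the underlying white noise, so its convolution with Liouville's kernel converges to $B$, which a characteristic-function argument makes precise. For the second level one writes $\mathbf X^{2,\e}$ as a double sum over the increments $\eta_k^i$ and splits it into an off-diagonal part ($k\ne k'$) and a diagonal part ($k=k'$). The off-diagonal part carries the L\'evy area and should converge to the antisymmetric part of $\bd$, while the diagonal part, by a law-of-large-numbers effect on the $\eta_k^2$, produces a deterministic symmetric (Stratonovich-type) correction that one must check matches the symmetric part of the geometric lift of $B$.

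I expect the main obstacle to be this second-level analysis. Controlling the double sum defining $\mathbf X^{2,\e}$ is delicate because the Liouville kernel is singular near the diagonal and the available control on $\eta$ is only through boundedness and its first two moments; the interplay between the singular kernel and the discrete quadratic variation of the walk is precisely what makes both the moment estimate and the identification of the diagonal correction term technically demanding. Once these are in hand, tightness together with convergence of the finite-dimensional distributions yields the weak convergence $\mathbf X^\e\to\mathbf B$ in the $\ga$-H\"older rough path topology, and the continuity of the solution map then concludes the proof.
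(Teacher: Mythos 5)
Your proposal is correct and follows essentially the same route as the paper: reduction via the continuity of the rough-path solution map (point (2) of Theorem \ref{thm:ex-uniq1}), then weak convergence of the lift $(X^\e,{\bf X}^{\bf 2,\e})$ to $(B,\bd)$ established through tightness plus convergence of finite-dimensional distributions, with the technical core being moment and characteristic-function estimates for integrals of deterministic kernels against $\theta^{i,\e}$. The only difference is one of emphasis: the paper borrows the reduction of the second-level (L\'evy area) convergence to one-dimensional Wiener-type integrals from \cite{BNRT}, so that the genuinely new work is concentrated in the combinatorial moment bounds of Lemmas \ref{lema01}--\ref{lm-control2}, which is exactly the obstacle you identify.
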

This theorem is obtained invoking many of the techniques introduced in \cite{BNRT}. In the end, as explained at Section \ref{sec:reduc-problem}, most of the technical differences between the two articles arise in the way to evaluate the moments of quantities like $\iou f(r) \theta^{i,\varepsilon}(r) \, dr$ for a given H\"older function $f$, and to compare them with the moments of  Gaussian random variable. This is where we shall concentrate our efforts in the remainder of the paper, mostly at Section~\ref{sec:moments-estimates}.

\section{Reduction of the problem}\label{sec:reduc-problem}

We shall recall here briefly some preliminary steps contained in \cite{BNRT}, which allow to reduce our problem to the evaluation of the moments of a specific type of Wiener integrals.

\smallskip

First of all, we need to recall the definition of some H\"older spaces, in which our convergences take place. We call for instance $\cac_j(\ou;\R^d)$ the space of continuous functions from $\ou^j$ to $\R^d$, which will mainly be considered for $j=1$ or $2$ variables. The H\"older norms on those spaces are then defined in the following way: for $f \in \cac_2(\ou;\R^d)$ let
$$
\norm{f}_{\mu} =
\sup_{s,t\in\ott}\frac{|f_{st}|}{|t-s|^\mu},
\quad\mbox{and}\quad
\cac_2^\mu(\ou;\R^d)=\lcl f \in \cac_2(\ou;\R^d);\, \norm{f}_{\mu}<\infty  \rcl.
$$
The usual H\"older spaces $\cac_1^\mu(\ou;\R^d)$  are then determined by setting $\|g\|_{\mu}=\|\der g\|_{\mu}$ for a continuous function $g\in\cac_1(\ou;\R^d)$, where $\der g\in\cac_2(\ou;\R^d)$ is defined by $\der g_{st}=g_t-g_s$. We then say that $g\in\cac_1^\mu(\ou;\R^d)$ iff $\|g\|_{\mu}$ is finite.
Note that $\|\cdot\|_{\mu}$ is only a semi-norm on $\cac_1(\ou;\R^d)$,
but we will  work  in general on spaces of the type
\begin{equation}\label{def:hold-init}
\cac_{1,a}^\mu(\ou;\R^d)=
\lcl g:\ott\to V;\, g_0=a,\, \|g\|_{\mu}<\infty \rcl,
\end{equation}
for a given $a\in V$, on which $\|g\|_{\mu}$  is a norm.

\smallskip

The second crucial point one has to recall is the natural definition of a L\'evy area for Liouville's fBm:
\begin{prop}\label{prop:hyp-fbm}
Let $B$ be a $d$-dimensional Liouville fBm,
and suppose that its Hurst parameter satisfies $H\in(1/3,1/2)$. Then

\smallskip

\noindent
{\bf (1)} $B$ is almost surely a $\ga$-H\"older path for any $1/3<\ga<H$.

\smallskip

\noindent
{\bf (2)} A L\'evy area based on $B$ can be defined by setting
$$
\bdst=\ist dB_u \otimes \int_s^u dB_v,
\quad\mbox{i. e.}\quad
\bdst(i,j)=\ist dB_u^i  \int_s^u dB_v^j,
\quad i,j\in\{1,\ldots,d  \},
$$ for $0\le s < t \leq T$. Here,
the stochastic integrals are defined as Wiener-It\^o integrals when $i\neq j$, while, when $i=j$, they are
simply given by
$$
\ist dB_u^i  \int_s^u dB_v^i = \frac12\left(B_t^i - B_s^i\right)^2.
$$

\smallskip

\noindent
{\bf (3)} The process $\bd$ is almost surely an element of $\cac_2^{2\ga}(\ou;\R^{d\times d})$, and satisfies the algebraic relation
$$
\bd_{st}-\bd_{su}-\bd_{ut}= \lp  B_u-B_s \rp \otimes \lp  B_t-B_u \rp,
$$
for all $0\le s \le u \le t\le 1$.
\end{prop}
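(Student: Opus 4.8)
The plan is to rely throughout on the Gaussian nature of Liouville's fBm, reducing each assertion to a moment estimate on a suitable Wiener integral. First I would write the increment of the $i$-th component as a single It\^o integral against $W^i$,
$$
B_t^i - B_s^i = \iou \hat K_{st}(r)\, dW_r^i,
\qquad
\hat K_{st}(r) = (t-r)^{H-\frac12}\1_{[0,t)}(r) - (s-r)^{H-\frac12}\1_{[0,s)}(r),
$$
and record that $\hat K_{st}$ naturally splits into a ``diagonal'' part $(t-\cdot)^{H-\frac12}$ supported on $[s,t)$ and a ``difference'' part supported on $[0,s)$; this decomposition drives all the estimates below. For part \textbf{(1)}, the It\^o isometry gives
$$
E\lc |B_t^i - B_s^i|^2 \rc
= \ist (t-r)^{2H-1}\, dr
+ \int_0^s \lc (t-r)^{H-\frac12}-(s-r)^{H-\frac12}\rc^2 dr
\le C\, |t-s|^{2H},
$$
the first term being $(t-s)^{2H}/(2H)$ and the second being handled by a routine estimate on the singular kernel. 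Since $B^i$ is Gaussian, $E[|B_t^i-B_s^i|^p]\le C_p|t-s|^{pH}$ for every $p$, and Kolmogorov's criterion (with $p$ large enough that $pH-1>\ga p$) yields $\ga$-H\"older regularity for each $\ga<H$.

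For part \textbf{(2)}, fix $i\neq j$, so that $B^i$ and $B^j$ are driven by the independent Wiener processes $W^i$ and $W^j$. I would then define
$$
\bdst(i,j) = \ist \lp B_u^j - B_s^j\rp dB_u^i
$$
as a double Wiener--It\^o integral against the pair $(W^i,W^j)$: inserting the kernel $\hat K$ into both factors expresses it as an integral with an explicit kernel $k_{st}$ on $\ou^2$, and one checks $k_{st}\in L^2(\ou^2)$, which makes the object a well-defined element of the second Wiener chaos. For $i=j$ the integral cannot be separated into independent factors, and one simply \emph{sets} $\bdst(i,i)=\tfrac12(B_t^i-B_s^i)^2$, the only prescription compatible with a geometric (Stratonovich-type) L\'evy area.

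For part \textbf{(3)}, the algebraic relation is purely formal and holds once the integrals of part \textbf{(2)} exist: splitting the outer integral $\ist$ at the intermediate time $u$ and decomposing $B_\cdot^j-B_s^j=(B_\cdot^j-B_u^j)+(B_u^j-B_s^j)$ on $[u,t]$ turns the defect $\bdst-\bd_{su}-\bd_{ut}$ into the product of increments, giving
$$
\bdst-\bd_{su}-\bd_{ut} = \lp B_u-B_s\rp\otimes\lp B_t-B_u\rp,
$$
that is $\delta\bd=\delta B\otimes\delta B$. The analytic content is the regularity $\bd\in\cac_2^{2\ga}$. Since $\bdst(i,j)$ belongs to a fixed Wiener chaos, hypercontractivity makes all its $L^p$ norms comparable to its $L^2$ norm, so it is enough to establish the single bound
$$
E\lc |\bdst(i,j)|^2\rc \le C\,|t-s|^{4H},
$$
and then to promote it to an almost sure H\"older estimate via a Garsia--Rodemich--Rumsey argument adapted to the two-parameter increment space $\cac_2$, in which the relation $\delta\bd=\delta B\otimes\delta B$ and part \textbf{(1)} are used to reconstruct $\bd$ from its values along a dyadic grid; since $4H>4\ga$ for $\ga<H$, this yields exactly $\bd\in\cac_2^{2\ga}$.

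I expect the main obstacle to be the $L^2$ bound $E[|\bdst(i,j)|^2]\le C|t-s|^{4H}$. It amounts to integrating the product of two singular Liouville kernels over the ordered region $s<v<u<t$, together with the boundary contributions coming from the ``past'' part of $\hat K$ on $[0,s)$, and extracting the sharp homogeneity $|t-s|^{4H}$ uniformly in $(s,t)$. Controlling the interaction between the singularity $(t-r)^{H-\frac12}$ and the difference kernel, while keeping the exponent sharp enough that $2\ga<2H$ and $3\ga>1$ remain compatible, is the delicate point; once this estimate is in hand, the hypercontractivity reduction, the GRR passage, and the Chen identity are all standard.
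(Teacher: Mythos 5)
The paper offers no proof of this proposition: it is simply recalled from the companion paper \cite{BNRT}, where it is established by essentially the route you outline (It\^o isometry, Gaussian scaling and Kolmogorov's criterion for \textbf{(1)}; realization of $\bdst(i,j)$, $i\neq j$, as a second-chaos Wiener--It\^o integral with square-integrable kernel for \textbf{(2)}; and the Chen identity plus hypercontractivity and a Garsia--Rodemich--Rumsey argument on $\cac_2$ for \textbf{(3)}). Your sketch is therefore correct and matches the intended argument; the only substantive items you leave unproven are the ones you flag yourself, namely the $L^2(\ou^2)$ membership of the kernel (which requires converting $dB^i_u$ into an integral against $dW^i$ \`a la Al\`os--Mazet--Nualart) and the bound $E[|\bdst(i,j)|^2]\le C|t-s|^{4H}$, both routine but tedious computations with the Liouville kernel.
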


\smallskip

These algebraic and analytic properties of the fBm path allow to invoke the rough path machinery (see \cite{FVbk,Gu,LyonsBook}) in order to solve equation (\ref{eq:eds-intro}):

\begin{teo}\label{thm:ex-uniq1}
Let $B$ be a Liouville fBm with Hurst parameter $1/3<H<1/2$, and
$\si:\R^n\to\R^{n\times d}$ be a $C^2$ function, which is bounded
together with its derivatives. Then

\smallskip

\noindent
{\bf (1)}
Equation (\ref{eq:eds-intro}) admits a unique solution $y\in\cac_1^{\ga}(\R^n)$ for any $1/3<\ga<H$, with the additional structure of weakly controlled process introduced in \cite{Gu}.

\smallskip

\noindent
{\bf (2)}
The mapping $(a,B,\bd)\mapsto y$ is continuous from
$\R^n\times\cac_1^{\ga}(\R^d)\times\cac_2^{2\ga}(\R^{d\times d})$
to $\cac_1^{\ga}(\R^n)$.
\end{teo}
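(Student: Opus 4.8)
The plan is to treat equation (\ref{eq:eds-intro}) as a rough differential equation and to invoke the controlled-path machinery of \cite{Gu}, since Proposition \ref{prop:hyp-fbm} provides exactly the input this machinery requires: the pair $(B,\bd)$ is a $\ga$-rough path above $B$, in the sense that $B\in\cac_1^\ga(\R^d)$, $\bd\in\cac_2^{2\ga}(\R^{d\times d})$, and the two objects are linked by the algebraic (Chen) relation displayed in part {\bf (3)}. With this structure in hand, the hypothesis $\ga>1/3$, i.e. $3\ga>1$, is precisely what makes the construction work.

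First I would recall the notion of a path weakly controlled by $B$: a pair $(y,y')$ with $y\in\cac_1^\ga(\R^n)$ and $y'\in\cac_1^\ga(\R^{n\times d})$ whose increment decomposes as $\der y_{st}=y_s'\,\der B_{st}+r_{st}^y$ with a remainder $r^y\in\cac_2^{2\ga}(\ou;\R^n)$. On such paths one defines the rough integral $\int z\,dB$ through the sewing procedure of \cite{Gu}: setting $J_{st}=z_s\,\der B_{st}+z_s'\,\bd_{st}$, the Chen relation for $\bd$ together with the controlled structure of $(z,z')$ forces $\der J$ to be a third-order increment of H\"older regularity $3\ga$, and since $3\ga>1$ the operator $\der$ is invertible there (the $\Lambda$-map of \cite{Gu}), which produces a genuine path $t\mapsto\iot z\,dB$ that is again weakly controlled by $B$. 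The standard composition lemma then shows that if $(y,y')$ is weakly controlled and $\si\in C^2$ is bounded together with its derivatives, the pair $\lp\si(y),\si'(y)\,y'\rp$ is weakly controlled by $B$, so that $\int\si(y)\,dB$ is well defined; the Young-type integral $\int b(y)\,du$ causes no difficulty since $b$ is bounded and the relevant exponent exceeds $1$.

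I would then close part {\bf (1)} by a fixed-point argument in the Banach space of weakly controlled paths issued from $a$. The map $\cm(y,y')=\lp a+\int\si(y)\,dB+\int b(y)\,du,\ \si(y)\rp$ sends this space into itself, and the explicit estimates for the rough integral show that, on a sufficiently small interval $[0,\tau]$, $\cm$ is a strict contraction, the contraction constant being governed by $\tau^{\ga}$ times a bound depending only on $\norm{B}_\ga$, $\norm{\bd}_{2\ga}$ and the $C^2$-norms of $\si$ and $b$. This yields a unique controlled solution on $[0,\tau]$; since $\tau$ can be chosen uniformly along the interval, one patches the local solutions together to obtain the unique global solution $y\in\cac_1^\ga(\R^n)$ on $\ou$.

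For part {\bf (2)}, the continuity of $(a,B,\bd)\mapsto y$, I would revisit the same estimates, now comparing the rough integrals and the fixed points associated with two sets of data $(a,B,\bd)$ and $(\tilde a,\tilde B,\tilde\bd)$. Because both the $\Lambda$-map and the composition with $\si$ are locally Lipschitz in all their arguments, the difference of the two controlled solutions is bounded, on each small interval, by $|a-\tilde a|$, $\norm{B-\tilde B}_\ga$ and $\norm{\bd-\tilde\bd}_{2\ga}$; propagating this bound across the finitely many patching intervals gives the asserted continuity into $\cac_1^\ga(\R^n)$. The only genuinely delicate point in the whole argument is the sewing/$\Lambda$-map step and the bookkeeping of its Lipschitz dependence on $(B,\bd)$, everything else being the routine Picard scheme; but this point is exactly what is packaged in \cite{FVbk,Gu,LyonsBook}, so the proof ultimately reduces to observing that Proposition \ref{prop:hyp-fbm} places $(B,\bd)$ within the scope of those references.
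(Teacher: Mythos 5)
Your proposal is correct and follows exactly the route the paper intends: the paper offers no proof of its own for this theorem, simply invoking the controlled-path machinery of \cite{Gu} (and \cite{FVbk,LyonsBook}) on the strength of Proposition \ref{prop:hyp-fbm}, and your sketch is a faithful unfolding of that machinery (weakly controlled paths, the sewing/$\Lambda$-map with $3\ga>1$, composition with $C^2$ coefficients, Picard fixed point, and local Lipschitz continuity of the It\^o map). Nothing further is needed.
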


\smallskip

One of the nice aspects of rough paths theory is precisely the second point in Theorem~\ref{thm:ex-uniq1}, which allows to reduce immediately our weak convergence result for equation~(\ref{eq:eds-intro}), namely Theorem \ref{thm:approx-diffusion}, to the following result on the approximation of $(B,\bd)$:

\begin{teo}\label{main-thm-weak}
Recall that the random variables $\eta_k^i$ satisfy Hypothesis \ref{hyp:1}, and let $X^\ep$ be defined by (\ref{eq:def-x-ep}). For any $\e>0$, let ${\bf X}^{\bf 2,\e}=({\bf X}^{\bf 2,\e}_{st}(i,j))_{s,t\ge 0;\,i,j=1,\ldots,d}$
be the natural L\'evy's area associated to $X^\e$,
given by
\begin{equation}\label{levydef}
{\bf X}^{\bf 2,\e}_{st}(i,j)=\int_s^t (X^{j,\e}_u-X^{j,\e}_s) \, dX^{i,\e}_u,
\end{equation}
where the integral is understood in the usual Lebesgue-Stieltjes sense.
Then, as $\e\rightarrow 0$,
\begin{equation}\label{loi}
(X^\e,{\bf X}^{\bf 2,\e})
\,\,\,{\stackrel{{\rm Law}}{\longrightarrow}}\,\,\,
(B,{\bf B}^{\bf 2}),
\end{equation}
where ${\bf B}^{\bf 2}$ denotes the L\'evy area defined in Proposition \ref{prop:hyp-fbm},
and where the convergence in law holds in the spaces
$\mathcal{C}_1^\mu(\R^d)\times\mathcal{C}_2^{2\mu}(\R^{d\times d})$,
for any $\mu<H$.
\end{teo}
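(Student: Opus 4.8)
The plan is to establish (\ref{loi}) through the classical two-step scheme for weak convergence in a function space: convergence of the finite-dimensional distributions of $(X^\e,{\bf X}^{\bf 2,\e})$ towards those of $(B,{\bf B}^{\bf 2})$, together with tightness of the family $\{(X^\e,{\bf X}^{\bf 2,\e})\}_{\e>0}$ in the target space $\cac_1^\mu(\R^d)\times\cac_2^{2\mu}(\R^{d\times d})$. Since the limit $(B,{\bf B}^{\bf 2})$ is Gaussian, its first level living in the first and its area in the second Wiener chaos generated by $W$ (up to the diagonal correction), both steps will be driven by a single analytic engine, namely the moment estimates for the basic functionals $\iou f(r)\,\theta^{i,\e}(r)\,dr$ announced in the introduction and developed in Section~\ref{sec:moments-estimates}.

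For the finite-dimensional convergence I would use the method of moments. Observe first that the increment $\delta X^{i,\e}_{st}$ of the process defined in (\ref{eq:def-x-ep}) is a linear functional of $\theta^{i,\e}$, of the form $\iou f_{st}(r)\,\theta^{i,\e}(r)\,dr$ for an explicit Hölder kernel $f_{st}$; likewise, writing the Lévy area (\ref{levydef}) as an iterated integral and inserting (\ref{eq:def-x-ep}) expresses ${\bf X}^{\bf 2,\e}_{st}(i,j)$ as a quadratic functional of the pair $(\theta^{i,\e},\theta^{j,\e})$. The key point is then to show that the joint moments of these linear and quadratic functionals converge, as $\e\to 0$, to the corresponding Gaussian moments of the limit, that is, to the joint moments of a suitable family of Wiener integrals $\iou f_a(r)\,dW^{i_a}_r$ and their iterated (second-chaos) counterparts. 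Since a Gaussian vector is characterized by its moments, this yields the finite-dimensional convergence; particular care must be taken on the diagonal $i=j$, where the Lebesgue--Stieltjes prescription (\ref{levydef}) is of Stratonovich type and produces in the limit the geometric correction $\frac12(\delta B^i_{st})^2$ recorded in Proposition~\ref{prop:hyp-fbm}.

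For the tightness I would rely on Kolmogorov-type criteria in the Hölder (or Besov) scale, combined with the compact embeddings $\cac_1^{\mu'}\hookrightarrow\cac_1^\mu$ and $\cac_2^{2\mu'}\hookrightarrow\cac_2^{2\mu}$ for $\mu<\mu'<H$. Concretely, it suffices to produce, uniformly in $\e$, moment bounds of the form
$$
E\big[|\delta X^\e_{st}|^{2m}\big]\le C\,|t-s|^{2m\mu'}
\qquad\text{and}\qquad
E\big[|{\bf X}^{\bf 2,\e}_{st}|^{2m}\big]\le C\,|t-s|^{4m\mu'},
$$
for $m$ large enough that a Garsia--Rodemich--Rumsey inequality upgrades them to a uniform control of the $\mu$- and $2\mu$-Hölder norms. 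For the level-one process these bounds again reduce to moment estimates for $\iou f_{st}(r)\,\theta^{i,\e}(r)\,dr$ with a kernel $f_{st}$ scaling like $|t-s|^{\mu'}$; for the area one invokes the multiplicative (Chen) relation from Proposition~\ref{prop:hyp-fbm}(3), so that a two-parameter version of the GRR lemma applies to the two-index object ${\bf X}^{\bf 2,\e}$ and yields tightness in $\cac_2^{2\mu}$. Combining tightness with the finite-dimensional convergence, and identifying each subsequential limit with $(B,{\bf B}^{\bf 2})$, gives (\ref{loi}).

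The main obstacle is precisely the non-Gaussian nature of the driving noise $\theta^\e$: unlike the Kac--Stroock scheme of \cite{BNRT}, here one must control high-order moments of polynomial functionals of a rescaled random walk and show both that they match the Gaussian moments in the limit and that they remain uniformly bounded with the sharp power of $|t-s|$. The combinatorics of pairing the walk increments $\eta^i_k$, isolating the diagonal pairings that survive in the limit from the off-diagonal ones that must vanish, together with the need to keep the Liouville singularity $(t+\e-r)^{H-\frac12}$ under control near $r=t$, is the technical heart of the argument and is exactly what Section~\ref{sec:moments-estimates} is devoted to.
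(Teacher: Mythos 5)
Your overall skeleton (finite-dimensional convergence plus tightness in the H\"older scale, both powered by moment estimates for $\iou f(r)\theta^{i,\e}(r)\,dr$) coincides with the paper's, and your treatment of tightness via uniform moment bounds and a Garsia--Rodemich--Rumsey argument is essentially what the paper does by importing \cite[Proposition 4.3]{BNRT} and feeding it Lemma \ref{lema01}. The genuine divergence is in the finite-dimensional step. You propose the method of moments applied directly to the joint vector of increments and areas; the paper (following \cite[Proposition 5.1]{BNRT}) instead conditions on one noise component, so that the relevant functionals become Wiener-type integrals $\iou Z^\e(r)\theta^{1,\e}(r)\,dr$ with a kernel $Z^\e$ that is random but measurable with respect to the other component, and then compares \emph{characteristic functions} $\Phi_\e(Z^\e)$ versus $\Phi(Z^\e)$ via Lemma \ref{lm-control2}. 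The payoff of that route is that only moments of one-dimensional linear functionals with a fixed kernel are ever needed, which is exactly what Lemmas \ref{lema01} and \ref{lm-control3} provide.

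Two points in your finite-dimensional argument need repair. First, the limit vector is not Gaussian: the areas ${\bf B}^{\bf 2}_{st}(i,j)$ live in the second Wiener chaos, so ``a Gaussian vector is characterized by its moments'' is not the right justification. Moment determinacy does hold for such mixed first/second chaos vectors (their components have exponential moments, so Carleman's condition is satisfied), but this must be invoked explicitly. Second, and more seriously, the method of moments requires uniform control and identification of the limits of \emph{joint} moments of quadratic functionals such as $\int_s^t(X^{j,\e}_u-X^{j,\e}_s)\,dX^{i,\e}_u$, that is, of expressions involving products of many factors of $\theta^{i,\e}$ and of $\theta^{j,\e}$ weighted by a two-variable kernel. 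The estimates of Section \ref{sec:moments-estimates} only treat $E[(\iou f(r)\theta^{i,\e}(r)\,dr)^{p}]$ for a deterministic one-variable $f$; a genuinely bivariate pairing analysis (or the paper's conditioning device, which reduces the bivariate problem to the univariate one) is missing from your plan and is not a routine extension, in particular on the diagonal $i=j$ where the pairings of the $\eta_k^i$ across the two slots of the double integral interact.
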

The remainder of our work is thus devoted to the proof of Theorem \ref{main-thm-weak}.

\smallskip

As usual in the context of weak convergence of stochastic processes, we divide the proof into
weak convergence for finite-dimensional distributions  and
a tightness type result. Furthermore, the tightness result in our case is easily deduced from the analogous result in \cite{BNRT}:
\begin{prop}
The sequence $(X^\e,{\bf X}^{\bf 2,\e})_{\ep>0}$ defined at Theorem \ref{main-thm-weak} is tight in
$\mathcal{C}_1^\mu(\R^d)\times\mathcal{C}_2^{2\mu}(\R^{d\times d})$.
\end{prop}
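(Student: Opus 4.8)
The plan is to rely on a Garsia--Rodemich--Rumsey type estimate, exactly as in \cite{BNRT}, which reduces the tightness of the pair $(X^\e,{\bf X}^{\bf 2,\e})$ to suitable uniform-in-$\e$ moment bounds on the increments of each component (tightness of the product following from tightness of the two marginals). Recall first that $X^\e_0=0$ for every $\e>0$, so the initial values are deterministic and only the H\"older seminorms need to be controlled. Concretely, I would show that for every $p\ge 1$ there is a constant $C_p$, independent of $\e$, such that
\[
E\lc \lln X^{i,\e}_t-X^{i,\e}_s\rrn^{p}\rc \le C_p\, |t-s|^{pH},
\qquad
E\lc \lln {\bf X}^{\bf 2,\e}_{st}(i,j)\rrn^{p}\rc \le C_p\, |t-s|^{2pH},
\]
uniformly over $s,t\in\ou$ and $i,j\in\{1,\dots,d\}$. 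Feeding these into the Garsia--Rodemich--Rumsey lemma on $\cac_1^\mu(\R^d)$ and $\cac_2^{2\mu}(\R^{d\times d})$, and choosing $p$ large enough that the exponents $H$ and $2H$ still dominate $\mu$ and $2\mu$ after the loss incurred in the estimate, yields a uniform bound on the moments of $\norm{X^\e}_\mu$ and $\norm{{\bf X}^{\bf 2,\e}}_{2\mu}$. Tightness then follows from the compact embedding of a slightly better H\"older space into the target one.

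The second step is to recognise that both families of increments are Wiener-type integrals against the rescaled random walk $\theta^\e$. From (\ref{eq:def-x-ep}), each increment $X^{i,\e}_t-X^{i,\e}_s$ is of the form $\iou f_{st}(r)\,\theta^{i,\e}(r)\,dr$ for an explicit kernel $f_{st}$ built from Liouville's kernel, so its moments are a particular instance of the general bounds for $\iou f(r)\,\theta^{i,\e}(r)\,dr$ established in Section \ref{sec:moments-estimates}; it remains only to insert these kernels and track the dependence of the constants on $|t-s|$, which produces the exponent $pH$. For the area, the diagonal terms are free, since for the smooth paths $X^\e$ one has ${\bf X}^{\bf 2,\e}_{st}(i,i)=\tfrac12(X^{i,\e}_t-X^{i,\e}_s)^2$, whence the corresponding bound reduces to the increment estimate taken with exponent $2p$. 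The off-diagonal terms, after expanding (\ref{levydef}), become double integrals against $\theta^{i,\e}\otimes\theta^{j,\e}$, whose moments are again covered by the estimates of Section \ref{sec:moments-estimates} and carry the exponent $2pH$.

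The reason this is \emph{easily deduced} from \cite{BNRT} is that the Garsia--Rodemich--Rumsey machinery is insensitive to the precise law of the approximating noise: it only consumes moment bounds of the above order, and the entire tightness argument of \cite{BNRT} was already phrased in these terms. Replacing the Kac--Stroock kernel used there by the Donsker kernel $\theta^\e$ therefore leaves the structure of the proof unchanged, and the only genuinely new ingredient is the uniform moment control for the non-Gaussian integrals $\iou f(r)\,\theta^{i,\e}(r)\,dr$. I expect this uniformity to be the sole real obstacle; however, for the purpose of tightness only upper bounds of the correct order in $|t-s|$ are required---not the identification of the Gaussian limit---so it is a direct and comparatively soft consequence of the estimates developed in Section \ref{sec:moments-estimates}.
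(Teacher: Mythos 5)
Your proposal is correct and follows essentially the same route as the paper: the paper's proof simply defers to the tightness argument of \cite{BNRT} (a Garsia--Rodemich--Rumsey/moment-criterion argument of exactly the kind you describe), noting that the only new ingredient in the Donsker setting is the uniform moment bound for integrals $\iou f(r)\theta^{i,\e}(r)\,dr$ supplied by Lemma \ref{lema01}, which is precisely the point you identify as the sole real obstacle. Your observation that only upper bounds of the right order in $|t-s|$ are needed (and that $\phi_f^\e\le\|f\|_{L^2}^4$ makes the bounds of Lemma \ref{lema01} uniform in $\e$) matches the paper's intent.
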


\begin{proof}
The proof follows exactly the steps of \cite[Proposition 4.3]{BNRT}, the only difference being that our Lemma \ref{lema01} has to be applied here in order to get the equivalent of inequality~(28) in \cite{BNRT}. Details are left to the reader.

\end{proof}

\smallskip

With these preliminaries in hand, we can now turn to the finite dimensional distribution (f.d.d. in the sequel) convergence, which can be stated as:
\begin{prop}\label{prop-weak}
Under the assumption \ref{hyp:1}, let $(X^\e,{\bf X}^{\bf 2,\e})$ be the approximation process defined by
(\ref{eq:def-x-ep}) and (\ref{levydef}). Then
\begin{equation}\label{fdd}
{\rm f.d.d.}-\lim_{\e\to 0} (X^\e,{\bf X}^{\bf 2,\e})
= (B,{\bf B}^{\bf 2}),
\end{equation}
where ${\rm f.d.d.}-\lim$ stands for the convergence in law of the
finite dimensional distributions. Otherwise stated, for any $k\ge 1$ and any family
$\{s_i,t_i;\, i\le k, 0\le s_i<t_i\le T\}$, we have
\begin{equation}\label{fddbis}
\cl-\lim_{\e\to 0}
(X_{t_1}^\e,{\bf X}_{s_1 t_1}^{\bf 2,\e},\ldots, X_{t_k}^\e,{\bf X}_{s_k t_k}^{\bf 2,\e})
= (B_{t_1},{\bf B}_{s_1 t_1}^{\bf 2},\ldots, B_{t_k},{\bf B}_{s_k t_k}^{\bf 2}).
\end{equation}
\end{prop}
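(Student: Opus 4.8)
The plan is to prove the convergence of all joint moments of the vector on the left-hand side of (\ref{fddbis}) and then to invoke a moment-determinacy argument. Indeed, each coordinate of the limiting vector $(B_{t_1},{\bf B}_{s_1t_1}^{\bf 2},\ldots,B_{t_k},{\bf B}_{s_kt_k}^{\bf 2})$ lies in the first or second Wiener chaos generated by the underlying process $W$; since finite linear combinations of elements of a fixed Wiener chaos are determined by their moments, and since the variables $\eta_k^i$ are almost surely bounded under Hypothesis \ref{hyp:1}, all joint moments of $(X^\e,{\bf X}^{\bf 2,\e})$ are finite and bounded uniformly in $\e$. Hence the $\cl$-convergence in (\ref{fddbis}) will follow once I establish that every finite product of coordinates of $(X^\e,{\bf X}^{\bf 2,\e})$ has expectation converging to the corresponding expectation for $(B,{\bf B}^{\bf 2})$.

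First I would rewrite each coordinate as an integral against $\theta^{i,\e}$. The linear terms are already of this form, since $X^{i,\e}_{t_\ell}=\int_0^1 f_{t_\ell}(r)\,\theta^{i,\e}(r)\,dr$ with $f_{t_\ell}(r)=(t_\ell+\e-r)^{H-\frac12}I_{[0,t_\ell]}(r)$. For the area terms I would insert the convolution expression (\ref{eq:def-x-ep}) for $X^{j,\e}$ together with the differentiated kernel producing $dX^{i,\e}$, so that ${\bf X}^{\bf 2,\e}_{s_\ell t_\ell}(i,j)$ becomes a double integral $\int\!\!\int g_{s_\ell t_\ell}(r,r')\,\theta^{j,\e}(r)\,\theta^{i,\e}(r')\,dr\,dr'$ against the product walk. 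In this way every moment we must compute is the expectation of a product of single and double integrals against rescaled random walks, and the independence across the components $i\neq j$ built into Hypothesis \ref{hyp:1} will reproduce in the limit the independence of the coordinates $W^i$ of the driving process.

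The heart of the matter is the moment estimate recorded in Lemma \ref{lema01}: for Hölder (or suitably integrable) functions $f_1,\ldots,f_m$ on $\ou$ one has
\[ E\Bigl[\prod_{\ell=1}^m \int_0^1 f_\ell(r)\,\theta^{\e}(r)\,dr\Bigr] \longrightarrow E\Bigl[\prod_{\ell=1}^m \int_0^1 f_\ell(r)\,dW_r\Bigr] \quad\text{as }\e\to0, \]
together with its analogue for products that also involve the double integrals. The mechanism is that $\int_0^1 f\,\theta^{\e}\,dr=\frac1\e\sum_{k\ge1}\eta_k\int_{\e^2(k-1)}^{\e^2k}f(r)\,dr$ is a weighted sum of the i.i.d. bounded variables $\eta_k$; expanding the product and using $E(\eta)=0$, $E(\eta^2)=1$, only the index configurations in which the $\eta_k$'s match exactly in pairs survive as $\e\to0$, and these reproduce precisely the Gaussian (Wick) moments of the Wiener integrals, whereas configurations carrying an unmatched index, or three or more coinciding indices, contribute a prefactor that vanishes with $\e$.

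The hard part will be to make this combinatorial expansion robust against the singularity of Liouville's kernel. Since $H<1/2$, the weight $(t+\e-r)^{H-\frac12}$ blows up as $r\uparrow t$ (regularised only by the shift $+\e$), so the functions $f_\ell$ are not bounded but merely carry an integrable boundary singularity, and the kernels $g_{s_\ell t_\ell}$ of the area integrals are singular along the diagonal $r=r'$. Obtaining bounds on the coincidence sums that are uniform in $\e$ against these singular weights --- and in particular extracting the correct, \emph{nonvanishing} diagonal contribution of the double integral, which is what produces the genuine second-chaos limit ${\bf B}^{\bf 2}$ with the exact normalisation of Proposition \ref{prop:hyp-fbm} rather than a degenerate object --- is the delicate step, and it is precisely where the present Donsker scheme departs from the Kac--Stroock computations of \cite{BNRT}. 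Once these moment convergences are secured for arbitrary products, the moment-determinacy argument of the first paragraph yields (\ref{fddbis}), hence (\ref{fdd}).
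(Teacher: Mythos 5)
Your overall strategy (convergence of all joint moments plus moment-determinacy of vectors living in the sum of the first two Wiener chaoses) is legitimate in principle and genuinely different from what the paper does: the paper does not use the method of moments at all, but rather follows \cite{BNRT} in reducing the area terms, by conditioning on one component, to \emph{single} Wiener-type integrals with a random integrand $Z^\e$, and then compares characteristic functions via Lemma \ref{lm-control2}; the moment estimates of Lemmas \ref{lema01} and \ref{lm-control3} enter only as coefficients in the series expansion of $\Phi_\e(Z^\e)-\Phi(Z^\e)$, and the whole point of that reduction is to never have to compute moments of the double integrals directly.

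The problem is that your proposal leaves precisely that hard part undone. You state as ``the heart of the matter'' a convergence
$E[\prod_{\ell}\int_0^1 f_\ell\,\theta^{\e}\,dr]\to E[\prod_{\ell}\int_0^1 f_\ell\,dW_r]$
and attribute it to Lemma \ref{lema01}, but that lemma only gives an upper bound on even and odd powers of a single integral of one fixed $f$; it is not a convergence statement, it does not cover mixed products of distinct $f_\ell$'s, and it says nothing about products that also contain the double integrals $\int\!\!\int g_{s_\ell t_\ell}(r,r')\,\theta^{j,\e}(r)\,\theta^{i,\e}(r')\,dr\,dr'$. For those, the kernel obtained by differentiating the Liouville kernel carries a non-integrable singularity of order $(w-v)^{H-3/2}$ which is only tamed after the cancellations encoded in the terms $Y^i(u,t)$ and $Z_u$ of the paper's proof; your own text acknowledges that obtaining coincidence-sum bounds uniform in $\e$ against these singular weights, and identifying the limiting second-chaos contribution with the normalisation of Proposition \ref{prop:hyp-fbm}, is ``the delicate step'', but then does not carry it out. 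Since this is exactly where the Donsker scheme differs from the Gaussian computations and where all the work of Section \ref{sec:moments-estimates} is concentrated, the proposal as written is a plan rather than a proof: the combinatorial pairing argument for mixed moments of products of singular single and double integrals would have to be developed from scratch (or one should instead adopt the paper's conditioning-plus-characteristic-function reduction, for which the stated lemmas suffice).
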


\begin{proof}
The structure of the proof follows again closely the steps of \cite[Proposition 5.1]{BNRT}, except that other kind of estimates will be needed in order to handle the Donsker case.

\smallskip

To be more specific, it should be observed that the first series of simplifications in the proof of \cite[Proposition 5.1]{BNRT} can be repeated here. They allow to pass from a convergence of double iterated integrals to the convergence of some Wiener type integrals with respect to $X^\ep$. Namely, for $i=1,2$ and $0\le u<t\le 1$, set
$$
Y^i(u,t)=\int_u^t (B^i_v - B^i_u)(v-u)^{H-\frac32}dv,
$$
and for $0\le u<t\le 1$ and $(u_1,\ldots,u_6)$ in a neighborhood of 0 in $\R^6$, set also
\begin{eqnarray*}
Z_u&=&u_1 + u_2 B^2_u + u_3 Y^{2}(u,t) + u_4 \int_u^t (v-u)^{H-\frac12}dW^2_v\\
&&+u_5\int_u^t dw \int_u^w (w-v)^{H-\frac32}\big(
(w-u)^{H-\frac12}-(v-u)^{\frac12}
\big) \, dW^2_v\\
&&+u_6\int_u^t dw \int_0^u (w-v)^{H-\frac32}
(w-u)^{H-\frac12} \, dW^2_v.
\end{eqnarray*}
Consider the analogous processes $Y^{i,\ep},Z^{\ep}$ defined by the same formulae, except that they are based on the approximations $\theta^{i,\ep}$ of white noise. We still need to recall a little more notation from \cite{BNRT}: for $f\in L^2([0,1])$ and $t\in\ou$, we set
\begin{equation}\label{eq:def-Phi-ep-phi-f}
\Phi_\e(f)=E\left( e^{i\int_0^t f(u)\theta^{\e,1}(u)du} \right),
\quad\mbox{ }\quad
\phi_f^{\varepsilon}=\int_0^1\int_0^1f^2(x)f^2(y)I_{\{|x-y|<\varepsilon^2\}}dxdy,
\end{equation}
and
$$
\Phi(f)=E\left( e^{i\int_0^t f(u) \, dW^{1}_u} \right)=e^{\frac12 \int_0^t f^2(u) \, du}.
$$
Then it is shown in \cite[Proposition 5.1]{BNRT} that one is reduced to prove that $\lim_{\e \to 0} v_\e^a = 0$, where $v_\e^a$ is given by
$$
v_\e^a =E\left( \Phi_\e(Z^\e)e^{i w \int_0^t \theta^{\e,2}(u)du}\right)  -
E\left( \Phi(Z^\e)e^{i w \int_0^t \theta^{\e,2}(u)du}\right),
$$
for an arbitrary real parameter $w$ in a neighborhood of 0. Furthermore, bounding $e^{i w \int_0^t \theta^{\e,2}(u)du}$ trivially by 1 and conditioning, it is easily shown that $v_\e^a$ is controlled by the difference $E[ |\Phi_\e(Z^\e))  -\Phi(Z^\e)|]$, for which Lemma~\ref{lm-control2} provides the bound
\begin{align*}
&| \Phi_\e(Z^\e))  - \Phi(Z^\e)| \\
 & \leq E  \left[
4\sqrt{\frac15}w^3(\phi_{Z_{\e}}^{\varepsilon})^{\frac12}\|Z_{\e}\|_{L^2}
k_\eta^3\exp(4 w^2 k_\eta^2 \|Z_{\e}\|_{L^2}^2) +w^2\varepsilon^{2\alpha}\|Z_{\e}\|_{\alpha}\|Z_{\e}\|_{L^2}\exp(w^2\|Z_{\e}\|_{L^2}^2) \right.\\
&\qquad + \left.
\frac8{\sqrt{5}}w^4(\phi_{Z_{\e}}^{\varepsilon})^{\frac12}\|Z_{\e}\|_{L^2}^2
{k_\eta}^2\exp(4w^2 k_\eta^2 \|Z_{\e}\|_{L^2}^2)+\frac12
w^4\phi_{Z_{\e}}^{\varepsilon}\exp(w^2\|Z_{\e}\|_{L^2}^2)\right],
\end{align*}
for any $\alpha\in(0,1)$. In order to reach our aim, it is thus sufficient to check the following inequalities:
$$
\sup_{\e} E \left( \|Z_{\e}\|_\alpha^2 \right)\le M, \quad \lim_{\e
\to 0} E \left( (\phi_{Z_{\e}}^{\varepsilon})^2 \right) =0
$$
and for $w<w_0$, where $w_0$ is a small enough constant,
$$
\sup_{\e} E \left(  {\rm e}^{{w^2  \|Z_{\e}\|^2_{L^2}}}\right) \le
M.
$$
However, these relations can be deduced, as (39), (40) and (41) in
\cite{BNRT}, from Lemma \ref{lema01} (it should be noticed however that a
one-parameter version of  \cite[Lemma 5.1]{BJQ} is needed for the adaptation of the latter result to our Donsker setting). The proof is thus
finished once the lemmas below are proven.

\end{proof}

\section{Moments estimates in the Donsker setting}\label{sec:moments-estimates}
In order to deal with our technical estimates, let us first introduce a new notation: set $\rho_1=(1-5^{1/2})/2$ and $\rho_2=(1+5^{1/2})/2$. Then the moments of any integral of a deterministic kernel $f$ with respect to $\theta^{i,\e}$ can be bounded as follows:
\begin{lema}\label{lema01}
Let $m\in\N$,  $f\in L^2([0,1])$, $i \in \{1,2\}$ and $\varepsilon >0$. Recall that the random variable $\eta$ is assumed to be almost surely bounded by a constant $k_\eta$. Then we have
\begin{align}\label{des2m}
&\left|E\left[\left(\int_0^1f(r)\theta^{i,\varepsilon}(r)dr\right)^{2m}\right]\right| \\
&\leq \frac{(2m)!}{2^m \, m!} \|f\|_{L^2}^{2m} \, +
\frac{(2m)!}{5^{1/2}(m-2)!}k_\eta^{2m} \,
{\left(\rho_2^{2m-1}-\rho_1^{2m-1}\right)}(\phi_f^{\varepsilon})^{\frac12}
\|f\|_{L^2}^{2m-2}, \notag
\end{align}
and
\begin{equation}\label{des2mmu}
\left|E\left[\left(\int_0^1f(r)\theta^{i,\varepsilon}(r)dr\right)^{2m+1}\right]\right|
\leq \frac{(2m+1)!}{5^{1/2}(m-1)!}k_\eta^{2m+1} \,
\left(\rho_2^{2m}-\rho_1^{2m}\right)
(\phi_f^{\varepsilon})^{\frac12}\|f\|_{L^2}^{2m-1},
\end{equation}
where $\phi_f^{\varepsilon}$ is the quantity defined at (\ref{eq:def-Phi-ep-phi-f}).
\end{lema}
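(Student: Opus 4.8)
The plan is to turn the Wiener-type integral into a discrete sum against the i.i.d. family, expand its moments combinatorially, and isolate a Gaussian \emph{pairing} term from a lower-order correction controlled by $\phi_f^{\e}$. First I would discretize: writing $I_k=[(k-1)\e^2,k\e^2)$ and $a_k=\e^{-1}\int_{I_k\cap\ou}f(r)\,dr$, the definition (\ref{donsker}) gives $\int_0^1 f(r)\theta^{i,\e}(r)\,dr=\sum_{k\ge1}\eta_k^i a_k$, a finite sum since only the cells meeting $\ou$ contribute. Two elementary facts about the deterministic coefficients drive everything. By Cauchy--Schwarz on each cell, $|a_k|^2\le\int_{I_k}f^2$, whence $\sum_k a_k^2\le\|f\|_{L^2}^2$; and since $|x-y|<\e^2$ whenever $x,y$ lie in the same cell, one has $\phi_f^{\e}\ge\sum_k(\int_{I_k}f^2)^2$ while trivially $\phi_f^{\e}\le\|f\|_{L^2}^4$. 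Combining these I would prove the key coefficient bound
\[
\sum_k|a_k|^{n}\le(\phi_f^{\e})^{1/2}\,\|f\|_{L^2}^{\,n-2},\qquad n\ge3,
\]
by Cauchy--Schwarz for $n=3$, and for $n\ge4$ by $\max_k\int_{I_k}f^2\le(\phi_f^{\e})^{1/2}$ together with $(\phi_f^{\e})^{1/4}\le\|f\|_{L^2}$.

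Next I would expand the moment. Writing $N$ for $2m$ or $2m+1$,
\[
E\Big[\Big(\sum_k\eta_k^i a_k\Big)^{N}\Big]=\sum_{\pi}\prod_{B\in\pi}E\big[\eta^{|B|}\big]\sum_{(k_B)\,\mathrm{distinct}}\prod_{B\in\pi}a_{k_B}^{|B|},
\]
where, by independence and $E(\eta)=0$, only partitions $\pi$ of $\{1,\dots,N\}$ whose blocks all have size $\ge2$ survive. Using $E(\eta^2)=1$, the crude bound $|E(\eta^{n})|\le k_\eta^{\,n}$, and $\sum_{(k_B)\,\mathrm{distinct}}\le\prod_{B\in\pi}\sum_k|a_k|^{|B|}$, the perfect matchings contribute at most $\frac{(2m)!}{2^m m!}\|f\|_{L^2}^{2m}$, which is exactly the Gaussian term; meanwhile the coefficient estimate of the previous step shows that any partition possessing at least one block of size $\ge3$ contributes at most $k_\eta^{N}(\phi_f^{\e})^{1/2}\|f\|_{L^2}^{N-2}$, where several large blocks are absorbed into a single factor $(\phi_f^{\e})^{1/2}$ by repeated use of $(\phi_f^{\e})^{1/2}\le\|f\|_{L^2}^2$. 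In the odd case no perfect matching exists, so only the second type of term survives, which already accounts for the shape of (\ref{des2mmu}).

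It remains to count the admissible partitions, and this is the step I expect to be the main obstacle. A direct enumeration of partitions into blocks of size $\ge2$ has no convenient closed form, so the efficient route is a recursion: conditioning on the block containing the last index (equivalently, isolating the $j=1$ term from the $j\ge2$ terms in $E[\eta_k^i(\sum_\ell\eta_\ell^i a_\ell)^{N-1}]$) yields a linear recurrence for the correction moments, whose homogeneous part reproduces the Gaussian recurrence $\mu_N\sim(N-1)\|f\|_{L^2}^2\,\mu_{N-2}$ and whose source introduces the $(\phi_f^{\e})^{1/2}$ factor. Majorizing this by a two-term recurrence $u_N\le u_{N-1}+u_{N-2}$---whose characteristic equation is precisely $x^2=x+1$ with roots $\rho_1,\rho_2$---produces, through Binet's formula, the Fibonacci-type factors $(\rho_2^{2m-1}-\rho_1^{2m-1})/5^{1/2}$ and $(\rho_2^{2m}-\rho_1^{2m})/5^{1/2}$, while the labelled choices of block members supply the factorial prefactors $\frac{(2m)!}{(m-2)!}$ and $\frac{(2m+1)!}{(m-1)!}$. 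This is exactly the combinatorial input to be imported, in its one-parameter form, from \cite[Lemma 5.1]{BJQ}; assembling the Gaussian term with the corrections counted in this way then gives (\ref{des2m}) and (\ref{des2mmu}).
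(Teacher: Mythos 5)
Your discretization $\int_0^1 f(r)\theta^{i,\e}(r)\,dr=\sum_k\eta_k^i a_k$, the coefficient estimates $\sum_k a_k^2\le\|f\|_{L^2}^2$ and $\sum_k|a_k|^{n}\le(\phi_f^{\e})^{1/2}\|f\|_{L^2}^{n-2}$ for $n\ge3$, and the isolation of the perfect-matching contribution $\frac{(2m)!}{2^m m!}\|f\|_{L^2}^{2m}$ are all correct and run parallel to Steps 1--3 of the paper's proof. The gap is in the combinatorial step, and it is not merely presentational. The bound $\bigl|\sum_{(k_B)\,\mathrm{distinct}}\prod_B a_{k_B}^{|B|}\bigr|\le\prod_B\sum_k|a_k|^{|B|}$ discards the ordering of the $s$ distinct indices $k_B$, i.e.\ a factor of order $1/s!$, and once that is lost the object you must count is the family of \emph{all} set partitions of $\{1,\dots,2m\}$ into blocks of size $\ge2$ having at least one block of size $\ge3$. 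That family is not of Fibonacci size: already the sub-family of partitions of $\{1,\dots,2m\}$ into $2m/3$ triples has cardinality $(2m)!/\bigl(6^{2m/3}(2m/3)!\bigr)$, and multiplying this by your per-partition bound $k_\eta^{2m}(\phi_f^{\e})^{1/2}\|f\|_{L^2}^{2m-2}$ yields, by Stirling, a quantity that eventually exceeds the whole correction term in (\ref{des2m}), which is of order $(2m)!\,\rho_2^{2m}/(m-2)!$ times the same common factor (the ratio grows like $(c\,m^{1/3})^{m}$, so the comparison fails for $m$ of order a few hundred). The recursion you sketch does not repair this: conditioning on the block containing the last index gives $M_N\le (N-1)\|f\|_{L^2}^2 M_{N-2}+\sum_{j\ge2}\binom{N-1}{j}(\cdots)M_{N-1-j}$, whose unfolding re-enumerates exactly the same set partitions; there is no justification for majorizing it by the two-term recurrence $u_N\le u_{N-1}+u_{N-2}$, so the stated constants do not emerge.

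What makes the Fibonacci numbers appear in the paper is a different organization of the sum. One first symmetrizes the $2m$-fold integral onto the simplex $r_1\ge\cdots\ge r_{2m}$, paying the factor $(2m)!$ once; the cell indices then satisfy $k_1\ge\cdots\ge k_{2m}$, so any repeated index occupies a \emph{contiguous} block, and the surviving configurations are indexed by ordered compositions $n_1+\cdots+n_s=2m$ with $n_j\ge2$ --- of which there are exactly $5^{-1/2}(\rho_2^{2m-1}-\rho_1^{2m-1})$. Moreover, the integration over the ordered simplex of block representatives retains the factor $1/(m-2)!$ that your unordered product bound throws away. If you keep the constraint $k_1>\cdots>k_s$ in your discrete sums (bounding $\sum_{k_1>\cdots>k_s}\prod_j|a_{k_j}|^{n_j}$ rather than the full product of one-index sums) and classify configurations by contiguous blocks on the simplex, your argument becomes essentially the paper's; as written, the counting step fails.
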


\begin{proof}
We focus first on inequality (\ref{des2m}) and divide this proof into several steps.

\smallskip

\noindent
\textit{Step 1: Identification of some key iterated integrals.} Notice that
\begin{multline*}
\left|E\left[\left(\int_0^1f(r)\theta^{i,\varepsilon}(r)dr\right)^{2m}\right]\right|  \\
\leq
\int_{[0,1]^{2m}}|f(r_1)|\cdots|f(r_{2m})||E(\theta^{i,\varepsilon}(r_1)\cdots\theta^{i,\varepsilon}(r_{2m}))|
dr_1\cdots dr_{2m}.
\end{multline*}
Transforming the symmetric integral on $\ou^{2m}$ into an integral on the simplex, and using expression (\ref{donsker}) for $\theta^\e$, we can write the latter expression as:
\begin{multline}\label{sumatori}
\frac{(2m)!}{\varepsilon^{2m}}
\sum_{\footnotesize{\begin{array}{c}k_1,\dots,k_{2m}=1\\k_1\geq\cdots
\geq k_{2m}\end{array}}}^{n(\varepsilon)}\int_{(k_1-1)\varepsilon^2}^{k_1\varepsilon^2}\cdots
\int_{(k_{2m}-1)\varepsilon^2}^{k_{2m}\varepsilon^2}|f(r_1)|\cdots|f(r_{2m})|\\
 \times |E(\eta_{k_1}^i\cdots\eta_{k_{2m}}^i)|I_{\{r_1\geq
r_2\geq\cdots\geq r_{2m}\}}dr_1\cdots dr_{2m},
\end{multline}
where $n(\varepsilon)=[\frac{1}{\varepsilon^2}]+1$ and where we understand that $f(x)=0$ whenever $x>1$.

\smallskip

Let us study now the quantities
$E(\eta_{k_1}^i\cdots\eta_{k_{l}}^i)$. If there exists $l$ such that
$k_l\neq k_j$  for all $j\neq l$ then
$E(\eta_{k_1}^i\cdots\eta_{k_{l}}^i)=0$. On the other hand, when
$k_{2l-1}=k_{2l}> k_{2l+1}$ for any $l$, we clearly have
$E(\eta_{k_1}^i\cdots\eta_{k_{l}}^i)=1.$ Finally, in the general
case, for all $l\in\N$, $ |E(\eta_{k_1}^i\cdots\eta_{k_{l}}^i)|\le
k_\eta^l. $ Separating the cases in this way for
$|E(\eta_{k_1}^i\cdots\eta_{k_{2m}}^i)|$, we end up with a
decomposition of the form
$E[(\int_0^1f(r)\theta^{i,\varepsilon}(r)dr)^{2m}]$ $= T_m^1 +
T_m^2$, where
\begin{multline*}
T_m^1 =
\frac{(2m)!}{\varepsilon^{2m}}\sum_{\footnotesize{\begin{array}{c}k_1,\dots,k_{m}=1\\k_1>\cdots>k_m\end{array}}}^{n(\varepsilon)}
\int_{(k_1-1)\varepsilon^2}^{k_1\varepsilon^2}\int_{(k_1-1)\varepsilon^2}^{k_1\varepsilon^2}\cdots
\int_{(k_{m}-1)\varepsilon^2}^{k_{m}\varepsilon^2}\int_{(k_{m}-1)\varepsilon^2}^{k_{m}\varepsilon^2}
\\
|f(r_1)|\cdots |f(r_{2m})|\times I_{\{r_1\geq r_2\geq\cdots\geq
r_{2m}\}}dr_1\cdots dr_{2m},
\end{multline*}
and where the term $T_m^2$ is defined by:
\begin{equation}\label{eq:def-T2}
T_m^2 =  \frac{(2m)!\, k_\eta^{2m}}{\varepsilon^{2m}}
\sum_{\footnotesize{\begin{array}{c}n_1,\dots,n_{s} \ge 2; s \in
\{1,...,m-1\} \\n_1+\cdots+n_s=2m\end{array}}} U_{n_1,\dots,n_{s}},
\end{equation}
with
\begin{equation}\label{eq:def-U-n1-ns}
U_{n_1,\dots,n_{s}}=
\sum_{\footnotesize{\begin{array}{c}k_1,\dots,k_{s}=1\\k_1>\cdots>k_s\end{array}}}^{n(\varepsilon)}
\int_{D_{k_1\cdots k_s}}
|f(r_1)|\cdots |f(r_{2m})| I_{\{r_1\geq r_2\geq\cdots\geq
r_{2m}\}}dr_1\cdots dr_{2m},
\end{equation}
and where we have set $D_{k_1\cdots k_s}=\prod_{j=1}^{s}[(k_j-1)\varepsilon^2, k_j\varepsilon^2]^{n_j}$.

\smallskip

Let us observe at this point that we have split our sum into $T_m^1$ and $T_m^2$ because $T_m^1$ represents the dominant contribution to our moment estimate. This is simply due to the fact that $T_m^1$ is obtained by assuming some pairwise equalities among the random variables $\eta_k^i$, while $T_m^2$ is based on a higher number of constraints. In any case, both  expressions will be analyzed through the introduction of some iterated integrals of the form
$$
K_\nu(k;v,w)= \frac{1}{\ep^{\nu}}\int_{[(k-1)\varepsilon^2,
k\varepsilon^2]^{\nu}} \prod_{j=1}^{\nu} |f(r_j)| \, I_{\{w\ge
r_1>\cdots>r_\nu\ge v\}} dr_1\cdots dr_\nu,
$$
defined for $\nu,k\ge 1$ and $0\le v<w\le 1$.

\smallskip

\noindent
\textit{Step 2: Analysis of the integrals $K_\nu$.} Those iterated integrals are treated in a slightly different way according to the parity of $\nu$. Indeed, for $\nu=2n$, thanks to the elementary inequality $2ab\leq a^2+b^2$, we obtain a bound of the form:
\begin{align}\label{cas2n}
&\sum_{k=1}^{n(\varepsilon)}K_{2n}(k;v,w) \\
&\le \sum_{k=1}^{n(\varepsilon)}\frac1{\varepsilon^{2n}}\int_{[(k-1)\ep^{2},k\ep^{2}]^{2n}}\prod_{i=1}^n\left(\frac{f^2(x_{2i})+f^2(x_{2i+1})}{2}\right)I_{\{w\ge x_1\geq \cdots\geq x_{2n}\geq v\}}dx_1\cdots dx_{2n} \notag \\
&= \sum_{k=1}^{n(\varepsilon)}\frac{1}{\varepsilon^{2n}}\int_{[(k-1)\ep^{2},k\ep^{2}]^{2n}}f^2(x_{1})\cdots f^2(x_{n})I_{\{w\ge x_1\geq \cdots\geq x_{n}\geq v\}}dx_1\cdots dx_{2n} \notag \\
&= \sum_{k=1}^{n(\varepsilon)}\int_{[(k-1)\ep^{2},k\ep^{2}]^{n}}f^2(x_{1})\cdots f^2(x_{n})I_{\{w\ge x_1\geq \cdots\geq x_{n}\geq v\}}dx_1\cdots dx_{n}  \notag\\
&\le \int_{[0,1]^n}f^2(x_1)\cdots
f^2(x_{n})I_{\{x_1-x_n<\varepsilon^2\}}I_{\{w\ge x_1\geq \cdots\geq
x_{n}\ge v\}}dx_1\cdots dx_{n}. \notag
\end{align}
The case $\nu=2n+1$ can be treated along the same lines, except for
the fact that one has to cope with some expressions of the form
\begin{eqnarray}\label{cas3}
\sum_{k=1}^{n(\varepsilon)}K_3(k;v,w)&\le&
 \sum_{k=1}^{n(\varepsilon)}\frac1{\varepsilon}\int_{[(k-1)\varepsilon^2, k\varepsilon^2]^2}
 |f(x_1)|f^2(x_2)I_{\{w\ge x_1\geq x_2\geq v\}}dx_1dx_2 \notag \\
& \le& \frac1{\varepsilon}\int_{[0,1]^2} |f(x_1)|f^2(x_2)I_{\{x_1-x_2<\varepsilon^2\}}
I_{\{w \ge x_1\geq x_2\geq v\}}dx_1dx_2.
\end{eqnarray}

\smallskip

Combining (\ref{cas3}) and (\ref{cas2n}) we can state the following general formula: let $\nu\ge 1$, and define a couple $(\nu^*,\hat \nu)$ as: (i) $\nu^*=\nu/2$, $\hat \nu=0$ if $\nu$ is even, (ii) $\nu^*=(\nu+1)/2$, $\hat \nu=1$ if $\nu$ is odd. With this notation in hand, we have:
\begin{multline}\label{sanars}
\sum_{k=1}^{n(\varepsilon)}K_{\nu}(k;v,w) \le
\frac1{\varepsilon^{\hat \nu}}\int_{[0,1]^{\nu^*}}|f(x_1)|^{2-\hat
\nu} f^2(x_2)\cdots f^2(x_{\nu^*})
I_{\{x_1-x_{\nu^*}<\varepsilon^2\}} \\
\times I_{\{w\ge
x_1\geq \cdots\geq x_{\nu^*}\ge v\}}dx_1\cdots dx_{\nu^*}.
\end{multline}

\smallskip

\noindent \textit{Step 3: Bound on $T_m^1$.} It is readily checked
that $T_m^1$ can be decomposed into blocks of the form $K_2(k;w,v)$,
for which one can apply (\ref{sanars}). This yields
\begin{align*}
& T_m^1 \leq \frac{(2m)!}{2^m}
\sum_{k_1,\dots,k_{m}=1}^{n(\varepsilon)}\int_{(k_1-1)\varepsilon^2}^{k_1\varepsilon^2}\cdots
\int_{(k_{m}-1)\varepsilon^2}^{k_{m}\varepsilon^2} f^2(r_1)\cdots
f^2(r_{m})I_{\{r_1\geq
r_2\geq\cdots\geq r_{m}\}}dr_1\cdots dr_{m}\\
& \quad \leq \frac{(2m)!}{2^m m!}\|f\|_{L^2}^{2m}.
\end{align*}

\smallskip

\noindent
\textit{Step 4: Bound on $U_{n_1,\dots,n_{s}}$.}
Recall that $U_{n_1,\dots,n_{s}}$ is defined by (\ref{eq:def-U-n1-ns}). We introduce now a recursion procedure in order to control this term. Namely, integrating with respect to the last $n_s$ variables, one obtains that
\begin{multline*}
\frac1{\varepsilon^{2m}}U_{n_1,\dots,n_{s}} =
\frac{1}{\varepsilon^{2m-n_s}}
\sum_{\footnotesize{\begin{array}{c}k_1,\dots,k_{s-1}=1\\k_1>\cdots>k_{s-1}\end{array}}}^{n(\varepsilon)}
\int_{D_{k_1\cdots k_{s-1}}} \prod_{l=1}^{2m-n_s}  |f(r_l)| \,
I_{\{r_1\geq r_2\geq\cdots\geq
r_{2m-n_s}\}}  \\
\times K_{n_s} (k_s;0;r_{2m-n_s}) \,
dr_1\cdots dr_{2m-n_s}
\end{multline*}
Plugging our bound (\ref{sanars}) on $K_{n_s}$ into this expression, we get
\begin{multline*}
\frac1{\varepsilon^{2m}}U_{n_1,\dots,n_{s}} \le
 \frac{1}{\varepsilon^{2m-n_s+\hat n_s}}
\sum_{\footnotesize{\begin{array}{c}k_1,\dots,k_{s-1}=1\\k_1>\cdots>k_{s-1}\end{array}}}^{n(\varepsilon)}
\int_{D_{k_1\cdots k_{s-1}}} \int_{[0,1]^{n_s^*}}
\prod_{l=1}^{2m-n_s}  |f(r_l)|  \ |f(y_1)|^{2-\hat n_s} \\
\times   \prod_{j=2}^{n_s^{*}} f^2(y_{j})
 I_{\{y_1-y_{n_s^*}<\varepsilon^2\}} I_{\{r_1\geq r_2\geq\cdots\geq
r_{2m-n_s}\geq
y_1\geq \cdots\geq y_{n_s^*}\}}
dr_1\cdots dr_{2m-n_s} dy_1\cdots dy_{n_s^*}.
\end{multline*}
We can now proceed, and integrate with respect to the variables
$r_l$ for $k_{s-1}\le l \le k_s$. In the end, since $\sum n_s=2m$, the remaining singularity in $\ep$ is of the form $\prod\ep^{-\hat n_s}$. However, each of the singularity $\ep^{-\hat n_s}$ comes with an integral $\phi_q^\ep$ with $q=|f|^{1/2}$. The latter integral is easily seen to be of order $\ep$, which compensates the singularity $\ep^{-\hat n_s}$ (recall that $\hat n_s\le 1$). Hence, iterating the integrations with respect to the variables $r$, we
end up with a bound of the form
\begin{equation}\label{pellema02}
\frac1{\varepsilon^{2m}}U_{n_1,\dots,n_{s}}
\leq\frac1{(m-2)!}\|f\|_{L^2}^{2(m-2)}\phi_f^{\varepsilon} \leq
\frac1{(m-2)!}\|f\|_{L^2}^{2m-2}(\phi_f^{\varepsilon})^{\frac12}.
\end{equation}

\smallskip

\noindent \textit{Step 5: Bound on $T_m^2$.} Owing to inequality
(\ref{pellema02}), our bound on $T_m^2$ can be reduced now to an
estimate of the  number of terms in the sum over $n_1,\ldots,n_s$ in
formula (\ref{eq:def-T2}). This boils down to the following
question: given a natural number $n$, how can we write it as a sum
of natural numbers (larger than one)?

\smallskip

This is arguably a classical problem, and in order to recall its answer, let us take a simple example: for $n=6$, the possible decompositions can be written as  $\{6;2+2+2;2+4;4+2;3+3\}$. Furthermore, notice that the decompositions of 6 can be obtained by adding $+2$ to the decompositions of 4 or adding $1$ to the last number of the decompositions of 5. Extrapolating to a general integer $n$, it is easily seen that the number of decompositions can be expressed as $u_{n-1}$, where $(u_n)_{n\ge 1}$ stands for the Fibonacci sequence. We have thus found a number of decompositions of the form
$$
N_n=5^{-1/2} \left(\rho_2^{n-1}-\rho_1^{n-1}\right),
$$
where the quantities $\rho_1,\rho_2$ appear in formula (\ref{des2m}). Moreover, the number of terms in $T_2$ is given by $N_{2m}-1$, the $-1$ part corresponding to the term $T_1$.

\smallskip

Putting together this expression with (\ref{pellema02}) and the result of Step 3, our claim (\ref{des2m}) is now easily obtained.

\smallskip

\noindent
\textit{Step 6: Proof of (\ref{des2mmu}).}
The proof of (\ref{des2mmu}) follows the same arguments as for (\ref{des2m}). We briefly sketch the main difference between these two proofs, lying in the analysis of the term $U_{n_1,\ldots,n_s}$. Indeed, since we are now dealing with an odd power $2m+1$, the equivalent of (\ref{pellema02}) is an upper bound of the form
\begin{equation}\label{ig1}
\frac1{\varepsilon}\int_{0}^{1}\int_{0}^{1}|f(y_1)|f^2(y_2)I_{\{|y_1-y_2|<\varepsilon^2\}}dy_1dy_2 \!
\int_{[0,1]^{m-1}} \prod_{j=1}^{m-1} f^2(x_j)
I_{\{x_1\geq \cdots\geq
x_{m-1}\}}dx_1\cdots dx_{m-1}.
\end{equation}
Furthermore, applying H\"older's inequality twice, we obtain
\begin{align*}
&\frac1{\varepsilon}\int_{0}^{1}\int_{0}^{1}|f(y_1)|f^2(y_2)I_{\{|y_1-y_2|<\varepsilon^2\}}dy_1dy_2
=\frac1{\varepsilon}\int_{0}^{1}f^2(y_2)\int_{0\vee(y_2-\varepsilon^2)}^{1\wedge(y_2+\varepsilon^2)}|f(y_1)|
dy_1dy_2\\
\leq&\left(\int_0^1\int_0^1f^2(y_1)f^2(y_2)I_{\{|y_1-y_2|<\varepsilon^2\}}dy_1dy_2\right)^{\frac12}\|f\|_{L^2}=
(\phi_f^{\varepsilon})^{\frac12}\|f\|_{L^2},
\end{align*}
and thus we can bound (\ref{ig1}) by
$\frac1{(m-1)!}(\phi_f^{\varepsilon})^{\frac12}\|f\|_{L^2}^{2m-1}$,
which ends the proof.

\end{proof}

Our next technical lemma compares the moments of a Wiener type integral with respect to $\theta^{\ep}$ and with respect to the white noise.

\begin{lema}\label{lm-control3}
Let $m\in\N$,  $f\in C^\al([0,1])$, $i \in \{1,2\}$, $\varepsilon >0$ and for $m\ge 1$, set
\begin{equation*}
J_m=
\left|\frac1{(2m)!}E\left[\left(\int_0^1f(r)\theta^{i,\varepsilon}(r)dr
\right)^{2m}\right]-\frac1{2^mm!}\int_{[0,1]^m }f^2(s_1)\cdots
f^2(s_m) ds_1\cdots ds_m  \right|.
\end{equation*}
 Then

\smallskip

\noindent
(1)
We have
$
J_1
\leq\varepsilon^{2\alpha}\|f\|_{L^2}\|f\|_{\alpha}.
$

\smallskip

\noindent
(2)
For any  $m > 1$, the following inequality holds true, where we recall that $\rho_1,\rho_2$ have been defined just before Lemma \ref{lema01}:
\begin{multline*}
J_m
 \leq \frac1{(m-1)!}\varepsilon^{2\alpha}\|f\|_{\alpha}\|f\|_{L^2}^{2m-1}  \\
 +\frac{k_{\eta}^{2m}}{\sqrt{5}(m-2)!}
{\left(\rho_1^{2m-1}-\rho_2^{2m-1}\right)}
(\phi_f^{\varepsilon})^{\frac12}\|f\|_{L^2}^{2m-2}
+\frac{1}{(m-2)!}\|f\|_{L^2}^{2(m-2)}\phi_f^{\varepsilon}.
\end{multline*}
\end{lema}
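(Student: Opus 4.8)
The plan is to deduce Lemma~\ref{lm-control3} directly from the computations already carried out in the proof of Lemma~\ref{lema01}, by simply comparing the moment $E[(\int_0^1 f\,\theta^{i,\e})^{2m}]$ against its Gaussian counterpart term by term. Recall that in Lemma~\ref{lema01} we decomposed $E[(\int_0^1 f(r)\theta^{i,\e}(r)dr)^{2m}] = T_m^1+T_m^2$, where $T_m^1$ collects the pairwise-equality contributions and $T_m^2$ the higher-order constraints. The key observation is that the Gaussian reference quantity $\frac{(2m)!}{2^m m!}\int_{[0,1]^m}f^2(s_1)\cdots f^2(s_m)\,ds$ is exactly what one would obtain from $T_m^1$ if the indicator $I_{\{r_1-r_m<\e^2\}}$ appearing in the simplex bound were dropped (i.e. if one integrated over the full simplex rather than over the diagonal strips). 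Thus the strategy is twofold: first, control $|T_m^1 - \frac{(2m)!}{2^m m!}\int_{[0,1]^m}f^2\cdots f^2|$, which measures the discrepancy created precisely by the constraint forcing the paired variables to lie in the same block of length $\e^2$; second, bound $T_m^2$ itself, which is a pure error term with no Gaussian counterpart.

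For the first part I would treat $m=1$ separately, since this is the base case and the cleanest. When $m=1$ the difference is $|E[(\int_0^1 f\,\theta^{i,\e})^2] - \frac12\int_0^1 f^2|$-type quantity, and a direct computation of the second moment of $\int_0^1 f(r)\theta^{i,\e}(r)dr$ shows it equals $\int_{[0,1]^2}f(r_1)f(r_2)E(\theta^{i,\e}(r_1)\theta^{i,\e}(r_2))\,dr$, which after the usual block analysis reduces to an integral over pairs $(r_1,r_2)$ lying in a common block. The difference from the full product $\frac12(\int_0^1 f)^2$-style term is governed by the H\"older modulus of $f$ over intervals of length $\e^2$: replacing $f(r_1)$ by $f(r_2)$ inside such a block costs $\|f\|_\al |r_1-r_2|^\al \le \|f\|_\al \e^{2\al}$. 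Integrating this against $|f|$ and using Cauchy--Schwarz yields exactly the stated bound $J_1\le \e^{2\al}\|f\|_{L^2}\|f\|_\al$.

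For general $m>1$ the idea is the same but organized recursively. The term $T_m^1$ is built out of $m$ independent paired blocks; on each paired block the product $f(r)f(r')$ of the two variables in the block differs from $f^2$ by an amount controlled by $\|f\|_\al \e^{2\al}$, and summing these $m$ localized discrepancies (each producing one factor $\e^{2\al}\|f\|_\al\|f\|_{L^2}$ and leaving $\|f\|_{L^2}^{2m-2}$ from the remaining pairs, with the combinatorial factor $\frac{1}{(m-1)!}$ from the simplex volume) gives the first summand $\frac{1}{(m-1)!}\e^{2\al}\|f\|_\al\|f\|_{L^2}^{2m-1}$. The remaining two summands come from bounding $T_m^2$: this is precisely the estimate already established in Step~4 and Step~5 of the proof of Lemma~\ref{lema01}, namely inequality~(\ref{pellema02}) together with the Fibonacci count $N_{2m}$. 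Reusing those bounds produces the $(\phi_f^\e)^{1/2}$ and $\phi_f^\e$ terms with the factors $\frac{k_\eta^{2m}}{\sqrt5 (m-2)!}(\rho_1^{2m-1}-\rho_2^{2m-1})$ and $\frac{1}{(m-2)!}\|f\|_{L^2}^{2(m-2)}$ respectively, essentially by copying the final assembly step.

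The main obstacle, and the only genuinely new work beyond Lemma~\ref{lema01}, is the first part: quantifying the discrepancy between $T_m^1$ and the Gaussian term. One must be careful that the bound $2ab\le a^2+b^2$ used in Step~2 of Lemma~\ref{lema01} is an \emph{inequality}, whereas here an \emph{equality}-type comparison is needed to isolate the genuine error. I would therefore redo the block analysis for $T_m^1$ keeping track of the difference $f(r_1)f(r_2)-f^2(r_2) = (f(r_1)-f(r_2))f(r_2)$ rather than applying the crude symmetrization, and estimate $|f(r_1)-f(r_2)|\le \|f\|_\al\e^{2\al}$ on each block. The bookkeeping of the diagonal-strip indicators $I_{\{\cdot<\e^2\}}$ versus the full simplex, and the verification that the leftover pairs contribute exactly $\frac{1}{(m-1)!}\|f\|_{L^2}^{2m-2}$ through the iterated simplex integration, is where the care is required; the rest is a transcription of the already-proven estimates.
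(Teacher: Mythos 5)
Your overall strategy coincides with the paper's: split the $2m$-th moment into $T_m^1+T_m^2$ as in Lemma~\ref{lema01}, recycle the bound on $T_m^2$ to get the $(\phi_f^{\varepsilon})^{1/2}$ term with the Fibonacci factor, and control the discrepancy between $T_m^1$ and the Gaussian moment by telescoping $f(r_1)\cdots f(r_{2m})-f^2(r_1)f^2(r_3)\cdots f^2(r_{2m-1})$ and using $|f(r_{2j})-f(r_{2j-1})|\le\|f\|_{\alpha}\varepsilon^{2\alpha}$ within each block; your treatment of $J_1$ is exactly the paper's Step~1.

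There is, however, one concrete misstep in your accounting of the error terms. You claim that the Gaussian reference $\frac{1}{2^m m!}\int_{[0,1]^m}f^2(s_1)\cdots f^2(s_m)\,ds_1\cdots ds_m$ is \emph{exactly} the de-localized version of $T_m^1$, and consequently that \emph{both} remaining summands in the bound (the one in $(\phi_f^{\varepsilon})^{1/2}$ and the one in $\phi_f^{\varepsilon}$) come from $T_m^2$. This is not the case: $T_m^1$ is a sum over \emph{strictly} decreasing block indices $k_1>\cdots>k_m$, whereas decomposing the Gaussian simplex integral according to the same blocks of length $\varepsilon^2$ produces a sum over $k_1\ge\cdots\ge k_m$. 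The part of the Gaussian sum in which some $k_l=k_{l+1}$ (the term $\tilde T_m^3$ in the paper's proof) has no counterpart in $T_m^1$ and must be estimated separately; it is bounded as in (\ref{pellema02}) by $\frac{1}{(m-2)!}\|f\|_{L^2}^{2(m-2)}\phi_f^{\varepsilon}$, and this is precisely where the third summand in the statement comes from, not from $T_m^2$ (which only yields the $(\phi_f^{\varepsilon})^{1/2}$ term). Followed literally, your plan would leave this diagonal contribution unaccounted for, and your assertion that the leftover pairs contribute ``exactly'' $\frac{1}{(m-1)!}\|f\|_{L^2}^{2m-2}$ is off by this $O(\phi_f^{\varepsilon})$ amount. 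The fix is exactly the step you yourself flagged as requiring care: when matching the block decomposition of $T_m^1$ against the full simplex integral, split off the coincidence set and bound it by the same argument that gives (\ref{pellema02}).
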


\begin{proof}
We divide again this proof into several steps.

\smallskip

\noindent
\textit{Step 1: Variance estimates.}
We prove here the first of our assertions: Notice that
$$\frac12\int_0^1f^2(s_1)ds_1=\frac1{2\varepsilon^2}\sum_{k=1}^{n(\varepsilon)}\int_{(k-1)\varepsilon^2}^{k\varepsilon^2}\int_{(k-1)\varepsilon^2}^{k\varepsilon^2}f^2(s_1)ds_2ds_1.$$
On the other hand
$$
\frac12E\left[\left(\int_0^1f(r)\theta^{i,\varepsilon}(r)dr\right)^2\right]=
\frac1{2\varepsilon^2}
\sum_{k=1}^{n(\varepsilon)}\int_{(k-1)\varepsilon^2}^{k\varepsilon^2}\int_{(k-1)\varepsilon^2}^{k\varepsilon^2}f(r_1)f(r_2)dr_2dr_1.
$$
We thus get
\begin{eqnarray*}
J_1&=&\left|\frac{1}{2\varepsilon^2}
\sum_{k=1}^{n(\varepsilon)}\int_{(k-1)\varepsilon^2}^{k\varepsilon^2}\int_{(k-1)\varepsilon^2}^{k\varepsilon^2}f(r_1)(f(r_2)-f(r_1))dr_2dr_1\right|\\
&=&\left|\frac{1}{2\varepsilon^2}
\int_{0}^{1}\int_{0}^1f(r_1)(f(r_2)-f(r_1))\left(\sum_{k=1}^{+\infty}I_{[k-1,k)^2}\left(\frac{r_1}{\varepsilon^2},\frac{r_2}{\varepsilon^2}\right)\right)dr_2dr_1\right|,
\end{eqnarray*}
and hence this quantity can be bounded as follows:
\begin{eqnarray*}
J_1&\le&\frac{1}{2\varepsilon^2}
\int_{0}^{1}\int_{0}^1|f(r_1)||(f(r_2)-f(r_1))|I_{\{|r_1-r_2|<\varepsilon^2\}}dr_2dr_1\\
&\leq&\frac{1}{2\varepsilon^2}
\int_{0}^{1}|f(r_1)|\|f\|_{\alpha}\int_{0}^1|r_2-r_1|^{\alpha}I_{\{|r_1-r_2|<\varepsilon^2\}}dr_2dr_1\\
&\leq&\varepsilon^{2\alpha}\|f\|_{L^2}\|f\|_{\alpha},
\end{eqnarray*}
which is the first claim of our lemma.

\smallskip

\noindent
\textit{Step 2: decomposition for higher moments:}
We can follow exactly the computations of Lemma \ref{lema01}, Step 1, in order to get
$$
\frac1{(2m)!}E\left[\left(\int_0^1f(r)\theta^{i,\varepsilon}(r)dr\right)^{2m}\right]
=\tilde T_{m}^{1}+\tilde T_{m}^{2},
$$
with $\tilde T_m^j=\frac{T_m^j}{(2m)!}$ for $j=1,2$. Furthermore,
the term $\tilde T_{m}^{2}$ can be bounded as in Lemma \ref{lema01},
and we obtain
\begin{equation}\label{primerf}
|\tilde T_{m}^{2}|\leq
\frac{k_{\eta}^{2m}}{\sqrt{5}(m-2)!} {\left(\rho_2^{2m-1}-\rho_1^{2m-1}\right)}
(\phi_f^{\varepsilon})^{\frac12}\|f\|_{L^2}^{2m-2}.\end{equation}

\smallskip

\noindent
\textit{Step 3: Study of $\tilde T_{m}^{1}$:}
We analyze $\tilde T_{m}^{1}$ in a slightly different way as in Lemma \ref{lema01}. Namely, we first write
\begin{multline}\label{primert}
\tilde T_{m}^{1}=\frac1{2^m\varepsilon^{2m}}\sum_{\footnotesize{\begin{array}{c}k_1,\dots,k_{m}=1\\k_1>\cdots>k_m\end{array}}}^{n(\varepsilon)}\int_{(k_1-1)\varepsilon^2}^{k_1\varepsilon^2}\int_{(k_1-1)\varepsilon^2}^{k_1\varepsilon^2}\cdots
\int_{(k_{m}-1)\varepsilon^2}^{k_{m}\varepsilon^2}\int_{(k_{m}-1)\varepsilon^2}^{k_{m}\varepsilon^2}
f(r_1)\cdots f(r_{2m})\\
 \times I_{\{\{r_1,r_2\}\geq \cdots\geq
\{r_{2m-1},r_{2m}\}\}}dr_1\cdots dr_{2m},
\end{multline}
where we have written $\{a,b\}\geq\{c,d\}$ for $a\wedge b\geq c\vee d$. We will now compare this quantity with another expression of the same type, called $\hat T_{m}^{1}$ and defined by
\begin{equation*}
\hat T_{m}^{1}=\frac1{2^mm!}\int_{[0,1]^m }f^2(s_1)\cdots f^2(s_m) ds_1\cdots
ds_m.
\end{equation*}

\smallskip

Let us thus write $\hat T_{m}^{1}$ as
\begin{eqnarray*}
\hat T_{m}^{1}&=&\frac1{2^m}\int_{[0,1]^m }\prod_{j=1}^{m}f^2(s_j) \,
I_{\{s_1\geq\cdots\geq s_m\}} \, ds_1\cdots
ds_m\\
&=& \frac1{2^m}
\sum_{\footnotesize{\begin{array}{c}k_1,\dots,k_{m}=1\\k_1\geq\cdots
\geq
k_{m}\end{array}}}^{n(\varepsilon)}\int_{(k_1-1)\varepsilon^2}^{k_1\varepsilon^2}\cdots
\int_{(k_{m}-1)\varepsilon^2}^{k_{m}\varepsilon^2} \prod_{j=1}^{m}f^2(s_j) \,  I_{\{s_1\geq\cdots\geq s_m\}} \, ds_1\cdots
ds_m\\
&=& \frac1{2^m}
\sum_{\footnotesize{\begin{array}{c}k_1,\dots,k_{m}=1\\k_1>\cdots>k_m\end{array}}}^{n(\varepsilon)}
\int_{(k_1-1)\varepsilon^2}^{k_1\varepsilon^2}\cdots
\int_{(k_{m}-1)\varepsilon^2}^{k_{m}\varepsilon^2} \prod_{j=1}^{m}f^2(s_j) \,  I_{\{s_1\geq\cdots\geq s_m\}} \, ds_1\cdots ds_m +
\tilde T_{m}^{3},
\end{eqnarray*}
where $\tilde T_{m}^{3}$ represents the part of the sum taken over the indices $k_1,\ldots,k_m$ such that there exist  $l$ satisfying $k_l=k_{l+1}$. However, this latter term can be bounded as in (\ref{pellema02}), yielding
\begin{equation}\label{segonf}
|\tilde T_{m}^{3}|\leq \frac{m-1}{2^m} \frac{1}{(m-2)!}\|f\|_{L^2}^{2(m-2)}\phi_f^{\varepsilon}\leq \frac{1}{2} \frac{1}{(m-2)!}\|f\|_{L^2}^{2(m-2)}\phi_f^{\varepsilon}.
\end{equation}

\smallskip

\noindent
\textit{Step 4: Conclusion.}
Putting together the decompositions we have obtained so far, we end up with
\begin{align*}
J_m & \leq |\tilde T_{m}^{2}|+|\tilde T_{m}^{3}| + \Bigg|
\frac1{2^m\varepsilon^{2m}}\sum_{\footnotesize{\begin{array}{c}k_1,\dots,k_{m}=1\\k_1>\cdots>k_m\end{array}}}^{n(\varepsilon)}\int_{(k_1-1)\varepsilon^2}^{k_1\varepsilon^2}\int_{(k_1-1)\varepsilon^2}^{k_1\varepsilon^2}\cdots
\int_{(k_{m}-1)\varepsilon^2}^{k_{m}\varepsilon^2}\int_{(k_{m}-1)\varepsilon^2}^{k_{m}\varepsilon^2}
\\& \qquad \left[f(r_1)\cdots f(r_{2m})-
f^2(r_1)f^2(r_3)\cdots f^2(r_{2m-1})\right]I_{\{\{r_1,r_2\}\geq
\cdots\geq \{r_{2m-1},r_{2m}\}\}}dr_1\cdots
dr_{2m}\Bigg|\\
& \leq |\tilde T_{m}^{2}|+|\tilde T_{m}^{3}| + \frac1{2^m m!
\varepsilon^{2m}}\int_{[0,1]^{2m}}\left|f(r_1)\cdots f(r_{2m})-
f^2(r_1)f^2(r_3)\cdots f^2(r_{2m-1})\right|\\& \hspace{7cm} \times
I_{\{|r_1-r_2|<\varepsilon^2\}}\cdots I_{
\{|r_{2m-1}-r_{2m}|<\varepsilon^2\}} \, dr_1\cdots dr_{2m}.
\end{align*}
Invoking our estimates (\ref{primerf}) and (\ref{segonf}) on $\tilde T_{m}^{2}$ and $\tilde T_{m}^{3}$, our bound on $J_m$ easily reduced to check that
\begin{multline*}
\frac1{2^mm!\varepsilon^{2m}}\int_{[0,1]^{2m}}\left|f(r_1)\cdots
f(r_{2m})- f^2(r_1)f^2(r_3)\cdots
f^2(r_{2m-1})\right|\\
\times I_{\{|r_1-r_2|<\varepsilon^2\}}\cdots I_{
\{|r_{2m-1}-r_{2m}|<\varepsilon^2\}}dr_1\cdots
dr_{2m} \leq \frac1{(m-1)!}\varepsilon^{2\alpha}\|f\|_{\alpha}\|f\|_{L^2}^{2m-1}.
\end{multline*}
The latter inequality can now be obtained from the decomposition
\begin{eqnarray*}
&&\left|f(r_1)\cdots f(r_{2m})- f^2(r_1)f^2(r_3)\cdots
f^2(r_{2m-1})\right|\\
&&\quad =\left|f(r_1)(f(r_2)-f(r_1))f(r_3)f(r_4)\cdots f(r_{2m})\right.\\
&&\qquad +f^2(r_1)f(r_3)(f(r_4)-f(r_3))f(r_5)f(r_6)\cdots f(r_{2m})\\
&&\qquad +\cdots\\
&&\qquad \left.+f^2(r_1)f^2(r_3)...f^2(r_{2m-3})f(r_{2m-1})(f(r_{2m})-f(r_{2m-1}))\right|,
\end{eqnarray*}
the inequalities
\begin{eqnarray*}
 \frac{1}{2\varepsilon^2}\int_0^1\int_0^1f^2(r_1)I_{|r_1-r_2|<\varepsilon^2}dr_2 dr_1&\leq&\|f\|_{L^2}^2,\\
\frac{1}{2\varepsilon^2}\int_0^1\int_0^1f(r_1)f(r_2)I_{|r_1-r_2|<\varepsilon^2}dr_2 dr_1&\leq&\|f\|_{L^2}^2,
\end{eqnarray*}
and from the estimate we have already obtained for $J_1$. This finishes the proof.

\end{proof}

\smallskip

Finally, the characteristic function of a Wiener type integral of the form $\int_0^1 f(r)\theta^{k,\varepsilon}(r)dr$ can be compared to its expected limit $\int_0^1 f(r)dW^k_r$ in the following way:

\begin{lema}\label{lm-control2}
Let $f\in{\mathcal{C}}^\alpha([0,1])$ for a certain $\alpha\in(0,1)$,
$k\in\{1,\ldots,d\}$ and $\varepsilon>0$. For any $u\in\R$, we have:
\begin{eqnarray*}
&&\left|E\big[{\rm e}^{iu\int_0^1
f(r)\theta^{k,\varepsilon}(r)dr}\big]
-E\big[{\rm e}^{iu\int_0^1 f(r)dW^k_r}\big]\right|\\
&& \quad \leq 4 (1/5)^{1/2}
u^3(\phi_f^{\varepsilon})^{\frac12}\|f\|_{L^2}
k_{\eta}^3\exp(4 u^2 k_{\eta}^2 \|f\|_{L^2}^2) +u^2\varepsilon^{2\alpha}\|f\|_{\alpha}\|f\|_{L^2}\exp(u^2\|f\|_{L^2}^2)\\
&&\qquad +8 \, 5^{-1/2}
u^4(\phi_f^{\varepsilon})^{\frac12}\|f\|_{L^2}^2 k_{\eta}^2\exp(4u^2
k_{\eta}^2 \|f\|_{L^2}^2)+ (1/2)
u^4\phi_f^{\varepsilon}\exp(u^2\|f\|_{L^2}^2).
\end{eqnarray*}

\end{lema}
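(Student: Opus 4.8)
The plan is to compare the two characteristic functions through their Taylor expansions in the parameter $u$ and to bound the resulting series term by term using the two previous lemmas. Write $I_\e=\int_0^1 f(r)\theta^{k,\e}(r)\,dr$ and $I=\int_0^1 f(r)\,dW^k_r$. Since $\eta$ is almost surely bounded, $\theta^{k,\e}$ is bounded, so $I_\e$ is almost surely bounded, while $I$ is Gaussian; hence both exponentials expand as absolutely convergent power series and
\begin{equation*}
E\big[{\rm e}^{iuI_\e}\big]-E\big[{\rm e}^{iuI}\big]
=\sum_{n\ge 0}\frac{(iu)^n}{n!}\Big(E\big[I_\e^n\big]-E\big[I^n\big]\Big).
\end{equation*}
The terms $n=0$ and $n=1$ vanish, the former trivially, the latter because $E(\eta)=0$ forces both first moments to be zero. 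It then remains to treat the even and odd contributions separately, and the crux is that each previous lemma controls exactly one of these.

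For the odd part, the odd moments of the centered Gaussian $I$ vanish, so only the $\theta$-moments survive; I would insert the bound (\ref{des2mmu}) for $E[I_\e^{2m+1}]$ and sum over $m\ge 1$ (the $m=0$ term being zero). After the factorials $(2m+1)!$ cancel, the surviving series has the shape $\sum_{m\ge1}\frac{|u|^{2m+1}}{\sqrt5\,(m-1)!}k_\eta^{2m+1}(\rho_2^{2m}-\rho_1^{2m})(\phi_f^\e)^{1/2}\|f\|_{L^2}^{2m-1}$. Re-indexing by $j=m-1$ and using $\rho_2^{2m}-\rho_1^{2m}\le\rho_2^{2m}=\rho_2^{2}(\rho_2^2)^j$ together with $\rho_2^{2}=(3+\sqrt5)/2\le 4$, this is dominated by a single exponential and reproduces the first term $4\,5^{-1/2}u^3(\phi_f^\e)^{1/2}\|f\|_{L^2}k_\eta^3\exp(4u^2k_\eta^2\|f\|_{L^2}^2)$.

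For the even part, I would first note that $\frac1{(2m)!}E[I^{2m}]=\frac1{2^m m!}\|f\|_{L^2}^{2m}$, so the $2m$-th coefficient of the difference is precisely $\pm u^{2m}J_m$ with $J_m$ as in Lemma \ref{lm-control3}; the even contribution is thus bounded by $\sum_{m\ge1}u^{2m}J_m$. For $m=1$ I use part (1), and for $m\ge 2$ part (2), which splits $J_m$ into three pieces. Summing the $\e^{2\alpha}$-piece (together with the $m=1$ term) yields the second announced term $u^2\e^{2\alpha}\|f\|_{\alpha}\|f\|_{L^2}\exp(u^2\|f\|_{L^2}^2)$; summing the $(\phi_f^\e)^{1/2}$-piece, which carries the factor $\rho_2^{2m-1}-\rho_1^{2m-1}\le 2\rho_2^{2m-1}$, yields the third term (leading power $u^4$) after the same exponential resummation; and summing the $\phi_f^\e$-piece, controlled by (\ref{segonf}) with its factor $1/2$, yields the fourth term $\tfrac12 u^4\phi_f^\e\exp(u^2\|f\|_{L^2}^2)$. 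Collecting the four blocks gives the statement.

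The bookkeeping of factorials and the geometric/exponential resummations is routine; the one point that needs genuine care is the passage to the power series and the legitimacy of the term-by-term bound, i.e. verifying that all series converge, which is exactly where the almost sure boundedness of $\eta$ and the exponential factors (restricting attention to moderate $u$) enter. Matching the explicit constants $4\,5^{-1/2}$, $8\,5^{-1/2}$, $1/2$ and the exponent $4u^2k_\eta^2\|f\|_{L^2}^2$ amounts to keeping track of the identities $\rho_2-\rho_1=\sqrt5$, $\rho_1\rho_2=-1$ and $\rho_2^2=\rho_2+1$, and presents no conceptual difficulty.
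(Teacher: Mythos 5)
Your proposal is correct and follows essentially the same route as the paper: the paper also splits the difference into its imaginary (odd-moment) part, controlled by the bound (\ref{des2mmu}) of Lemma \ref{lema01} together with the vanishing of odd Gaussian moments, and its real (even-moment) part, controlled term by term via $J_m$ from Lemma \ref{lm-control3}, followed by the same exponential resummations using $\rho_2^2\le 4$. The only cosmetic difference is that the paper phrases the even/odd split as a real/imaginary decomposition rather than as a power-series expansion, and the convergence issue you flag is harmless since the dominant even coefficient $\frac{(2m)!}{2^m m!}\|f\|_{L^2}^{2m}/(2m)!$ resums to $\exp(u^2\|f\|_{L^2}^2/2)$ for every $u$.
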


\begin{proof}
Let us control first the imaginary part of the difference. Using lemma \ref{lema01}, and invoking the fact that the odd moments of a Gaussian random variable are null, we get
\begin{eqnarray*}
&&\left|{\rm Im}\left(E\big[{\rm e}^{iu\int_0^1
f(r)\theta^{k,\varepsilon}(r)dr}\big] -E\big[{\rm e}^{iu\int_0^1
f(r)dW^k_r}\big]\right)\right|\\
& & \quad \leq \sum_{m=1}^\infty \frac{|u|^{2m+1}}{(2m+1)!}
\left|E\left[\left( \int_0^1 f(r)\theta^{k,\varepsilon}(r)dr
\right)^{2m+1}
\right]\right|\\
& & \quad \leq \sum_{m=1}^\infty
\frac{(1/5)^{1/2}|u|^{2m+1}}{(m-1)!}k_{\eta}^{2m+1}
 \lp \rho_2^{2m}-\rho_1^{2m}\rp
\, (\phi_f^{\varepsilon})^{\frac12}\|f\|_{L^2}^{2m-1}\\
& & \quad \leq 4 (1/5)^{1/2}
u^3(\phi_f^{\varepsilon})^{\frac12}\|f\|_{L^2}
k_{\eta}^3\sum_{m=1}^\infty\frac1{(m-1)!}\left(4k_{\eta}^2\|f\|_{L^2}^2u^2\right)^{m-1}
\\
 & & \quad \leq 4(1/5)^{1/2} u^3(\phi_f^{\varepsilon})^{\frac12}\|f\|_{L^2}
k_{\eta}^3\exp(4 u^2 k_{\eta}^2 \|f\|_{L^2}^2) .
\end{eqnarray*}

\smallskip

In order to control the real part of the difference, we will use Lemma \ref{lm-control3}. This yields:
\begin{align*}
&\left|{\rm Re}\big(E\big[{\rm e}^{iu\int_0^1
f(r)\theta^{k,\varepsilon}(r)dr}\big] -E\big[{\rm e}^{iu\int_0^1
f(r)dW^k_r}\big]\big)\right|\\
\le&  \sum_{m=1}^{+\infty}u^{2m}\Bigg[
\frac1{(m-1)!}\varepsilon^{2\alpha}\|f\|_{\alpha}\|f\|_{L^2}^{2m-1} \\
&
\hspace{3cm}+u^4(\phi_f^{\varepsilon})^{\frac12}\frac8{\sqrt{5}}\|f\|_{L^2}^2
k_{\eta}^2\sum_{m=2}^{+\infty}\frac1{(m-2)!}\left(4k_{\eta}^2u^2\|f\|_{L^2}^2\right)^{m-2}\\
& \hspace{3cm}+ \frac12 u^4\phi_f^{\varepsilon}\sum_{m=2}^{+\infty}\frac1{(m-2)!}\left(u^2\|f\|_{L^2}^2\right)^{m-2}\Bigg].
\end{align*}
The latter quantity can be bounded by
\begin{multline*}
u^2\varepsilon^{2\alpha}\|f\|_{\alpha}\|f\|_{L^2}\exp(u^2\|f\|_{L^2}^2)
+\frac8{\sqrt{5}}u^4(\phi_f^{\varepsilon})^{\frac12}\|f\|_{L^2}^2
k_{\eta}^2\exp(4u^2 k_{\eta}^2 \|f\|_{L^2}^2)\\
+ \frac12 u^4\phi_f^{\varepsilon}\exp(u^2\|f\|_{L^2}^2),
\end{multline*}
which ends the proof.

\end{proof}


\end{document}